\newtheorem{theorem}{Theorem}[section]
\newtheorem{lemma}[theorem]{Lemma}
\def\proofbox{\begin{picture}(6.5,6.5)
\put(0,0){\framebox(6.5,6.5){}}\end{picture}}
\newenvironment{proof}{\noindent{\it Proof.\quad}}{\hfill\proofbox}
\begin{document}

\begin{center} {\bf \LARGE Injective Simplicial Maps of the Arc Complex}\\

\vspace{0.09in}

{\bf \LARGE on Nonorientable Surfaces}

\vspace{0.15in}
\Large{Elmas Irmak}\\
\vspace{0.09in}
\end{center}



\begin{abstract} We prove that each injective simplicial map from the arc complex
of a compact, connected, nonorientable surface with nonempty boundary to itself is induced
by a homeomorphism of the surface. We also prove that the automorphism group of the arc complex
is isomorphic to the quotient of the mapping class group of the surface by its
center. \end{abstract}

{\it MSC}: 57M99, 20F38.

{\it Keywords}: Mapping class groups; Complex of arcs;
Nonorientable surfaces.


\section{Introduction}

Let $N$ be a compact, connected, nonorientable surface of genus $g$ with $r \geq 1$ boundary components.
The genus of a nonorientable surface is the maximum number of projective planes in a connected sum decomposition.
Mapping class group, $Mod_N$, of $N$ is defined to be the group of isotopy classes of all self-homeomorphisms of $N$.
An arc $a$ on $N$ is called \textit{properly embedded} if
$\partial a \subseteq \partial N$ and $a$ is transversal to
$\partial N$. $a$ is called \textit{nontrivial} (or
\textit{essential}) if $a$ cannot be deformed into $\partial N$ in
such a way that the endpoints of $a$ stay in $\partial N$ during
the deformation. The \textit{complex of arcs}, $\mathcal{A}(N)$, on $N$ is an abstract simplicial complex.
Its vertices are the isotopy classes of nontrivial properly embedded arcs on $N$.
A set of vertices forms a simplex if these vertices can be represented by pairwise disjoint arcs.

The main results of this paper:

\begin{theorem} Let $N$ be a compact, connected, nonorientable surface of genus $g$ with
$r \geq 1$ boundary components. If $\lambda : \mathcal{A}(N) \rightarrow \mathcal{A}(N)$ is
an injective simplicial map then $\lambda$ is induced by a homeomorphism $h : N \rightarrow N$
(i.e $\lambda([a]) = [h(a)]$ for every vertex $[a]$ in $\mathcal{A}(N))$.\end{theorem}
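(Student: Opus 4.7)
\emph{Proof proposal.} The plan is to follow the Ivanov-style rigidity strategy that Irmak and others have developed for curve and arc complexes: extract enough combinatorial information from $\lambda$ to build a homeomorphism $h : N \to N$ realizing it on every vertex, then verify that $h$ indeed induces $\lambda$.

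First, I would identify the top-dimensional simplices of $\mathcal{A}(N)$, which correspond to ideal triangulations of $N$, i.e.\ maximal collections of pairwise disjoint, pairwise nonisotopic essential properly embedded arcs. The number of arcs in such a triangulation is determined by the Euler characteristic of $N$ and the number of boundary components. Because $\lambda$ is injective and simplicial, it sends $k$-simplices to $k$-simplices; a link-counting argument (an arc of an ideal triangulation admits only finitely many flip-replacements, and a maximal simplex is characterized by having the correct number of vertices) shows that $\lambda$ sends ideal triangulations to ideal triangulations.

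Next, I would characterize the topological types of arcs combinatorially. Because $N$ is nonorientable, the relevant invariants include: one-sided vs.\ two-sided, whether the arc is separating, the number of boundary components it meets, and the topology of the complementary pieces (their genus, orientability, and number of Möbius bands). Each of these should be recognizable purely from the local structure of the arc complex, for instance by the number of flip replacements available at the arc inside an ambient ideal triangulation, or by the structure of the link of the arc. Once these types are shown to be preserved by $\lambda$, the image $\lambda(\Delta)$ of any ideal triangulation $\Delta$ matches $\Delta$ arc-by-arc, both in type and in the incidence pattern of adjacent arcs and complementary triangles. Fixing such a $\Delta = \{[a_1],\dots,[a_n]\}$ with $\lambda(\Delta) = \{[b_1],\dots,[b_n]\}$ and $[b_i] = \lambda([a_i])$, the combinatorial isomorphism of the two triangulated surfaces produces a homeomorphism $h : N \to N$ with $h(a_i)$ isotopic to $b_i$ for all $i$.

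Finally, to promote the agreement of $\lambda$ and the induced map $h_\ast$ from $\Delta$ to all vertices of $\mathcal{A}(N)$, I would use that any two ideal triangulations of $N$ are connected by a sequence of flips, and that a flip is recognizable from the simplicial data alone. Thus $\lambda$ commutes with flips, and the agreement propagates from $\Delta$ to every other ideal triangulation, and hence to every vertex. The principal obstacle I expect is the combinatorial characterization of topological types of arcs in the nonorientable setting—especially distinguishing one-sided arcs from two-sided ones using only the incidence structure of $\mathcal{A}(N)$—together with the small-genus and small-boundary base cases where the arc complex is low-dimensional or exhibits sporadic behavior; these will likely have to be treated by hand.
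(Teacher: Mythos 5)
Your overall skeleton --- maximal simplices of $\mathcal{A}(N)$ are triangulations, an injective simplicial map carries maximal simplices to maximal simplices, build a homeomorphism realizing $\lambda$ on one triangulation, then propagate the agreement using flip-connectivity of triangulations and the fact that a codimension-one face lies in at most two maximal simplices --- is the same as the paper's, and your remark that the low-complexity cases must be handled by hand matches the paper, which treats $(g,r)=(1,1)$, $(1,2)$, $(2,1)$ separately via Scharlemann's explicit description of $\mathcal{A}(N)$ and the known structure of $Mod_N$. The final propagation step you sketch is essentially the paper's last theorem, which cites Hatcher and Mosher for the connectivity of triangulations by elementary moves.

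The genuine gap is in the middle step, and it occurs twice. First, you propose to recognize the topological type of an arc (one-sided versus two-sided, separating behavior, etc.) ``by the number of flip replacements available'' or ``by the structure of the link.'' Such counting and link characterizations are the standard tool for \emph{automorphisms}, but here $\lambda$ is only an injective simplicial map of an infinite complex: it need not carry the link of a vertex onto the link of its image, it does not a priori preserve non-disjointness, and it preserves no count of available flips. The paper never characterizes arc types by link data; instead it proves directly, by completing an arc to a triangulation and exchanging a single arc, that geometric intersection number one is preserved, and then, by explicit contradiction arguments with auxiliary arcs, that embedded triangles, regular non-embedded triangles (annular), and twisted non-embedded triangles (M\"obius) of a triangulation are sent to triangles of the same type. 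Second, even granting type preservation, your assertion that ``the combinatorial isomorphism of the two triangulated surfaces produces a homeomorphism'' skips the hardest point: knowing the triangles of $T'$ and which arcs they share does not determine $N$ or the map; one must control \emph{how} the triangles are glued along their sides, i.e.\ the orientation types of the side identifications, before the triangle-by-triangle homeomorphisms can be glued into a homeomorphism of $N$ carrying each $a_i$ to $a_i'$. In the nonorientable setting the ambiguity between an annular and a M\"obius-band gluing is exactly the crux, and the paper resolves it with a further family of configuration lemmas (auxiliary arcs meeting the configuration once, whose images are constrained by the intersection-one lemma) feeding into a careful compatibility-of-orientations argument in the ordered-triangulation lemma. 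Without arguments of this kind your plan does not yet produce the homeomorphism $h$.
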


\begin{theorem} Let $N$ be a compact, connected, nonorientable surface of genus $g$ with
$r \geq 1$ boundary components. Then $Aut(\mathcal{A}(N)) \cong Mod_N /Z(Mod_N)$.\end{theorem}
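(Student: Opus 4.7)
The plan is to define the natural representation
\[
\Phi : Mod_N \longrightarrow Aut(\mathcal{A}(N)), \qquad \Phi([h])([a]) = [h(a)],
\]
and to show that $\Phi$ is surjective with $\ker \Phi = Z(Mod_N)$. Well-definedness and multiplicativity of $\Phi$ are immediate, since isotopic homeomorphisms carry isotopic arcs to isotopic arcs. Surjectivity is handed to us by Theorem 1.1 applied to any $\lambda \in Aut(\mathcal{A}(N))$, which is in particular an injective simplicial self-map and therefore equals $\lambda_h = \Phi([h])$ for some homeomorphism $h$. The first isomorphism theorem then reduces the statement to identifying $K := \ker \Phi$ with $Z(Mod_N)$.

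For the inclusion $Z(Mod_N) \subseteq K$, I would exploit the interaction of central elements with Dehn twists about two-sided simple closed curves. For such a curve $\gamma$, centrality of $[h]$ forces $T_{h(\gamma)} = h\, T_\gamma\, h^{-1} = T_\gamma$, and since a Dehn twist determines its curve up to isotopy, $[h(\gamma)] = [\gamma]$. Every essential arc $a$ can be characterized up to isotopy by the two-sided simple closed curves that arise as boundary components of a regular neighbourhood of $a \cup \partial N$, so $h$ must also fix the isotopy class of $a$; hence $[h] \in K$.

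For the reverse inclusion $K \subseteq Z(Mod_N)$, I take $[h] \in K$ and choose an ideal triangulation $\mathcal{T} = \{a_1,\dots,a_n\}$ of $N$, namely a maximal pairwise disjoint, pairwise non-isotopic system of essential arcs that cuts $N$ into disks. Because $h$ preserves each $[a_i]$ and the $a_i$ are pairwise disjoint and non-isotopic, one can isotope $h$ so that each $a_i$ is preserved setwise; incidence with $\mathcal{T}$ then forces each complementary disk to be preserved as well. A nonorientable Alexander-trick argument on each disk, together with the constraint that $h$ must preserve the orientation data along each arc (otherwise conjugation by $[h]$ would invert the associated Dehn twist), pins $[h]$ down to one of finitely many topological symmetries of $N$. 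A direct generator-by-generator check against Dehn twists about two-sided curves and $Y$-homeomorphisms (which together generate $Mod_N$) verifies that these symmetries are central, yielding $[h] \in Z(Mod_N)$.

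The principal obstacle, as I see it, lies in this last step: turning the Alexander-style local reduction into a global conclusion that the possible values of $[h]$ are precisely the central elements of $Mod_N$. This requires genuine case work for the surfaces of small nonorientable genus or few boundary components, where $Z(Mod_N)$ is known to be nontrivial and generated by explicit hyperelliptic-type involutions, and a uniform verification in the generic case that $K$ collapses to the identity mapping class.
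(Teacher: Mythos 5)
Your overall skeleton (the natural map $\Phi$, surjectivity via Theorem 1.1, and a computation of the kernel) is reasonable, but the identification $\ker\Phi = Z(Mod_N)$ --- which is where all the content lies --- is not actually proved, and both inclusions as sketched have genuine gaps. For $Z(Mod_N)\subseteq\ker\Phi$ you assert that an essential arc is determined up to isotopy by the two-sided curves bounding a regular neighbourhood of $a\cup\partial N$; this is stated without proof, and on a nonorientable surface it is delicate: the neighbourhood may be a M\"obius band with a hole whose boundary record is a single curve, and mapping classes such as boundary slides fix that curve, so the claim is essentially what needs proving rather than a fact one can quote. Note also that on a nonorientable surface conjugation only gives $h T_\gamma h^{-1} = T_{h(\gamma)}^{\pm 1}$, and in the exceptional surfaces $(g,r)=(1,1),(1,2),(2,1)$ --- precisely where $Z(Mod_N)$ is nontrivial --- there are few or no useful two-sided curves, so the twist argument does not run uniformly.

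The reverse inclusion $\ker\Phi\subseteq Z(Mod_N)$ is the crux, and as written it is both circular and incomplete. The constraint that $h$ ``must preserve the orientation data along each arc (otherwise conjugation by $[h]$ would invert the associated Dehn twist)'' presupposes that $[h]$ commutes with those twists, i.e.\ that $[h]$ is central, which is the conclusion you are trying to reach; and your final ``generator-by-generator check'' is run against a generating set (twists about two-sided curves and $Y$-homeomorphisms) that does not generate $Mod_N$ when boundary components may be permuted --- elementary braids and boundary slides are also needed. For comparison, the paper never identifies the kernel with the center uniformly: for $g+r\geq 4$ it proves $Aut(\mathcal{A}(N))\cong Mod_N$ by showing that an automorphism fixing every arc class is induced by the identity (the center being trivial in that range), while for the three exceptional surfaces it does not analyze the kernel at all but computes both sides explicitly --- $Mod_N/Z(Mod_N)$ from Korkmaz's and Stukow's structure theorems, and $Aut(\mathcal{A}(N))$ from Scharlemann's explicit description of the arc complex --- and observes that they are abstractly isomorphic. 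The ``genuine case work'' you defer would in effect have to reproduce exactly these computations, so as it stands your proposal has a real gap rather than a complete alternative argument.
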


Both of these theorems were proven for orientable surfaces by Irmak-McCarthy in \cite{IrM}.
We follow the outline of Irmak-McCarthy \cite{IrM} paper. The main results in [IrM]: Let $R$ be a compact,
connected, orientable surface of genus $g$ with
$r \geq 1$ boundary components. If $\lambda : \mathcal{A}(R) \rightarrow \mathcal{A}(R)$ is
an injective simplicial map then $\lambda$ is induced by a homeomorphism $h : R \rightarrow R$, and
$Aut(\mathcal{A}(R)) \cong Mod_R /Z(Mod_R)$.

\protect\nopagebreak\noindent\rule{1.5in}{.01in}{\vspace{0.005in}}

{\small{This paper was written during the author's stay at Mathematical Sciences Research Institute (MSRI) in Berkeley in Fall 2007.
The author thanks MSRI for the conditions provided during this stay.}}

Ivanov proved that the automorphisms of the arc complex which are induced by automorphisms of the curve complex are induced by
homeomorphisms of the surface for orientable surfaces in \cite{Iv}. Using this, he proved that each automorphism of the complex of curves
is induced by a homeomorphism of the surface if the genus at least two, and the automorphism group of
the curve complex is isomorphic to the mapping class group quotient by its center. As an application he proved that
isomorphisms between any two finite index subgroups are geometric. Ivanov's results were proven by Korkmaz in \cite{K1}
and Luo in \cite{L} for lower genus cases.

After Ivanov's work, mapping class group was viewed as the automorphism group
of various geometric objects on orientable surfaces, and this was used to have information on the algebraic structure
of the mapping class groups. These objects include Schaller's complex (see \cite{Sc} by Schaller), the
complex of pants decompositions (see \cite{M} by Margalit), the complex of nonseparating curves (see \cite{Ir3} by Irmak), the complex of separating curves (see \cite{BM1} by Brendle-Margalit, and \cite{MV} by McCarthy-Vautaw), the complex of Torelli geometry (see \cite{FIv} by Farb-Ivanov), the Hatcher-Thurston complex (see \cite{IrK} by Irmak-Korkmaz), and complex of arcs (see \cite{IrM} by Irmak-McCarthy). As applications, Farb-Ivanov proved that the automorphism group of the Torelli subgroup is isomorphic to the mapping class group in \cite{FIv}, and McCarthy-Vautaw extended this result to $g \geq 3$ in \cite{MV}.

Some similar results on simplicial maps and the applications for orientable surfaces are as follows: Irmak proved that superinjective simplicial maps of the curve complex are induced by homeomorphisms of the surface to classify injective homomorphisms from finite
index subgroups of the mapping class group to the whole group (they are geometric except for closed genus two surface) for genus at least two in \cite{Ir1}, \cite{Ir2}, \cite{Ir3}. Behrstock-Margalit and Bell-Margalit proved these results for lower genus cases in \cite{BhM} and in \cite{BeM}. Brendle-Margalit proved that superinjective simplicial maps of separating curve complex are induced by homeomorphisms, to prove that an injection from a finite index subgroup of
$K$ to the Torelli group, where $K$ is the subgroup of mapping class group
generated by Dehn twists about separating curves, is induced by a
homeomorphism in \cite{BM1}, \cite{BM2}. Shackleton proved that
injective simplicial maps of the curve complex are induced by homeomorphisms in \cite{Sc} (he also considers maps between different surfaces), and he obtained strong local co-Hopfian results for mapping class groups. Bell-Margalit proved that superinjective simplicial maps of the curve complex are onto. For nonorientable odd genus surfaces Atalan-Ozan proved that the automorphism group of the curve complex is isomorphic to the mapping class group if $g + r \geq 6$ in \cite{A}.

\section{Mapping Class Groups and Complex of Arcs}

First we give definitions of some homeomorphisms on nonorientable surfaces and also a model of punctured $\mathbb{RP}^2$ as
given in \cite{K2}.

Elementary braid: If $N$ has at least two boundary components then an elementary braid is defined as follows: Let $a$ be an arc joining two distinct boundary components $\partial_i$ and $\partial_j$ of $N$. A regular neighborhood of $a \cup \partial_i \cup \partial_j$ is a pair of pants $P$. Let $z$ denote the boundary component of $P$ which is different from
$\partial_i$ and $\partial_j$. Let $\sigma_a$ be the isotopy class of a homeomorphism $N \rightarrow N$ supported in $P$ interchanging $\partial_i$ and $\partial_j$ such that $\sigma_a ^2 = t_z$ where $t_z$ is the Dehn twist about $z$ (the isotopy class of a twist homeomorphism about two-sided simple closed curve $z$). The class $\sigma_a$ will be called an {\it elementary braid}.

Boundary slide: Consider a Mobius band $M$ with one hole. By sliding the boundary component along the core of $M$ we get a homeomorphism of $M$ fixing a neighborhood of $\partial M$. If $M$ is embedded in a surface $N$, this homeomorphism can be extended to a homeomorphism of $N$ by the identity. This homeomorphism and its isotopy class will be called a boundary slide.

\begin{figure}
\begin{center}
\epsfxsize=2.3in \epsfbox{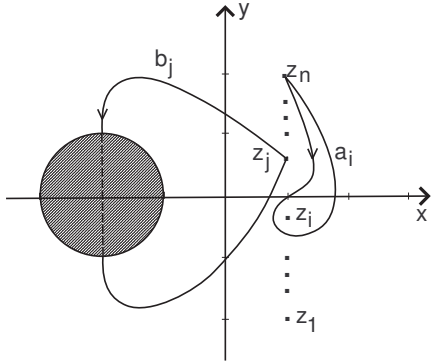}
\caption{Punctured $\mathbb{RP}^2$}
\label{PuncturedRP2}
\end{center}
\end{figure}

A model of punctured $\mathbb{RP}^2$: Consider the two sphere as the one point compactification of the xy-plane. Let $D= \{ (x, y) : (x+2)^2 + y^2 < 1\}$. For a positive integer $n$, take $n$ distinct points $z_i$ on the line segment $x=1, -2 \leq y \leq 2$, so that $z_1= (1, -2), z_n=(1, 2)$ and the second coordinates of $z_i$ are in ascending order. Remove the disc $D$ and identify the antipodal points on the boundary. This is a model for the surface $N$ of genus 1 with $n$ punctures as shown in Figure 1. Let $b_j$ be as shown in the figure. Let $v_j$ be the boundary slide (puncture slide) obtained by sliding $z_j$ once along $b_j$. Then $v_j^2$ is a Dehn twist about the boundary component of a regular neighborhood of $b_j$.

Now we will prove our main results for $(g, r) = (1,1)$, $(g, r)=(1,2)$, and $(g, r)=(2,1)$. We will then give a general argument for the proof of the remaining cases.

\begin{theorem}
\label{mainthm1} Let $N$ be a compact, connected, nonorientable surface of genus $g$ with
$r \geq 1$ boundary components. If $(g, r)=(1,1)$ or $(g, r)=(1,2)$ or $(g, r)=(2,1)$,
then $Aut(\mathcal{A}(N)) \cong Mod_N /Z(Mod_N)$.\end{theorem}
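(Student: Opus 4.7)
The plan is to dispatch each of the three sporadic cases $(g,r)\in\{(1,1),(1,2),(2,1)\}$ by a direct combinatorial analysis, since for these small surfaces the arc complex $\mathcal{A}(N)$ is very simple (in the extreme case $(g,r)=(1,1)$ of the M\"obius band it is essentially trivial, and in the other two cases it has only finitely many $Mod_N$-orbits of simplices whose $1$-skeleton can be drawn explicitly). For each such $N$ I would first enumerate the isotopy classes of essential properly embedded arcs, using the punctured $\mathbb{RP}^2$ model of Figure~1 for $(1,2)$ and an analogous picture of the Klein bottle with a hole for $(2,1)$. The goal is then to analyze the natural homomorphism
\[
  \Phi : Mod_N \longrightarrow \mathrm{Aut}(\mathcal{A}(N)),\qquad [h]\longmapsto\bigl([a]\mapsto[h(a)]\bigr),
\]
and to show separately that $\Phi$ is surjective and that $\ker\Phi = Z(Mod_N)$, from which the theorem is immediate.

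For surjectivity, because only finitely many combinatorial symmetries of $\mathcal{A}(N)$ need to be realized, it suffices to exhibit enough concrete homeomorphisms: the Dehn twists $t_z$ about two-sided curves, the elementary braids $\sigma_a$ available when $r\geq 2$, the boundary slides $v_j$ described in the model of punctured $\mathbb{RP}^2$, and the obvious reflections of that model together generate a subgroup of $Mod_N$ whose image under $\Phi$ can be checked case by case to hit every automorphism of $\mathcal{A}(N)$. To identify $\ker\Phi$ with $Z(Mod_N)$, I would use the known descriptions of $Mod_N$ for these three surfaces (as in Korkmaz \cite{K2}) to isolate a small finite set of candidate central elements --- typically the boundary Dehn twists together with a boundary slide (recall the relation $v_j^2 = t_z$) and an involution specific to nonorientable surfaces --- verify that each of them acts as the identity on every isotopy class of arc, and then show conversely that any mapping class fixing each vertex of a maximal simplex (an ``ideal triangulation'') of $\mathcal{A}(N)$ lies in this set, via the Alexander lemma applied to the disk pieces obtained by cutting $N$ along such a collection of arcs.

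The main obstacle I anticipate is the case $(g,r)=(2,1)$, the Klein bottle with a hole: its arc complex is the richest of the three, and its mapping class group has subtle $2$-torsion central elements arising from the nonorientability, with no direct analogue in the orientable setting treated in \cite{IrM}; one must be careful both not to miss such a central element when computing $Z(Mod_N)$ and not to mistake a purely combinatorial automorphism of $\mathcal{A}(N)$ for a geometric one. The other two cases, particularly $(1,1)$, should reduce essentially to inspection once the arc complex is drawn.
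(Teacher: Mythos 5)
Your strategy is sound, but it is genuinely different from the paper's. The paper never analyzes the natural map $\Phi : Mod_N \rightarrow Aut(\mathcal{A}(N))$: it simply computes the two sides independently and observes that they are abstractly isomorphic. Concretely, $Mod_N$ is identified with the mapping class group of the corresponding punctured surface and read off from Korkmaz \cite{K2} for $(1,1)$ (it is $\mathbb{Z}_2$, generated by a boundary slide) and for $(1,2)$ (dihedral of order $8$ with center $\langle v_1 v_2\rangle$), and from Stukow \cite{St} for $(2,1)$ (where $Mod_N \cong (\mathbb{Z}\rtimes\mathbb{Z}_2)\times\mathbb{Z}_2$ and $Z(Mod_N)=\langle\sigma\rangle$); the complex $\mathcal{A}(N)$ is taken from Scharlemann \cite{Sc}: one vertex for $(1,1)$, eight vertices for $(1,2)$, and the infinite complex on $\{[a],\ t_e^n([b]),\ t_e^n([c]) : n\in\mathbb{Z}\}$ for $(2,1)$, whose automorphism groups are visibly $1$, $\mathbb{Z}_2\times\mathbb{Z}_2$ and $\mathbb{Z}\rtimes\mathbb{Z}_2$. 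Your route (surjectivity of $\Phi$ together with $\ker\Phi = Z(Mod_N)$) proves more, namely that the isomorphism is induced by the geometric action, which is in fact what is implicitly used later when automorphisms in these cases are asserted to be induced by homeomorphisms; the paper's route is shorter because it outsources both computations to the literature and only claims an abstract isomorphism.

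Two caveats if you carry your plan out. First, the phrase ``only finitely many combinatorial symmetries of $\mathcal{A}(N)$ need to be realized'' is false for $(2,1)$: there $Aut(\mathcal{A}(N)) \cong \mathbb{Z}\rtimes\mathbb{Z}_2$ is infinite, so you must first determine $Aut(\mathcal{A}(N))$ combinatorially (for instance, $[a]$ is the unique vertex of infinite valence, and the families $\{t_e^n([b])\}$ and $\{t_e^n([c])\}$ are distinguished by valence $5$ versus $2$, as in the paper's proof of Theorem \ref{mainthm2}) and then realize a finite generating set, say the images of $t_e$ and of the puncture slide $v$. Second, in the kernel computation the Alexander-lemma step can only conclude that a class fixing every vertex lies in a finite explicit list, not that it is trivial: $v_1$ for $(1,1)$, $v_1v_2$ for $(1,2)$ and $\sigma$ for $(2,1)$ all act trivially on $\mathcal{A}(N)$ while being nontrivial in $Mod_N$, so the final step must be the identification of that list with $Z(Mod_N)$, which you anticipate. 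With these points handled, your argument goes through and recovers (indeed strengthens) the theorem.
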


\begin{proof} Case (i): Suppose that $(g, r)=(1,1)$. $Mod_N$ is isomorphic to the mapping class group of a nonorientable surface $S$ of genus 1 with 1 puncture, which is isomorphic to $\mathbb{Z}_2$ and it is generated by the boundary slide $v_1$ as defined above by Theorem 4.1 in Korkmaz \cite{K2}. So, $Mod_N /Z(Mod_N)$ is trivial and also $Aut(\mathcal{A}(N))$ is trivial since $\mathcal{A}(N)$ has only one vertex in this case.

Case (ii): Suppose that $(g, r)=(1,2)$. $Mod_N$ is isomorphic to the mapping class group of a nonorientable surface $S$ of genus 1 with 2 punctures, which is isomorphic to the the Dihedral group of order 8 by Corollary 4.6 in Korkmaz \cite{K2}. It has center $\mathbb{Z}_2$ generated by $v_1 v_2$, where $v_i$ is the puncture slide as defined before. We have $Mod_N /Z(Mod_N) \cong \mathbb{Z}_2 \times \mathbb{Z}_2$. In this case $\mathcal{A}(N)$ is given by Scharleman in \cite{Sc}. He proves that the complex has 8 vertices, and it is as shown in Figure 2. It is easy to see that $Aut(\mathcal{A}(N))$ is $\mathbb{Z}_2 \times \mathbb{Z}_2$. Hence, we get $Aut(\mathcal{A}(N)) \cong Mod_N /Z(Mod_N) \cong \mathbb{Z}_2 \times \mathbb{Z}_2$.

Case (iii): Suppose that $(g, r)=(2, 1)$. $Mod_N$ is isomorphic to the mapping class group of a nonorientable surface $S$ of genus 2 with 1 puncture, and the structure of the mapping class group of $N$ is given by Stukow in \cite{St} as follows: Let $e$ and $b$ be as in the Figure 3 (i). Let $v$ be the puncture slide along $b$. Cut $N$ along $e$ to get a cylinder with one puncture. Reflection of this cylinder across the circle parallel to boundary components and passing through the puncture induces a homeomorphism $\sigma$ such that $\sigma(e) = e^{-1}$. Stukow proves that $Mod_N = (<t_e> \rtimes <v>) \times < \sigma > \cong (\mathbb{Z} \rtimes \mathbb{Z}_2) \times \mathbb{Z}_2$ and $Z(Mod_N) \cong \mathbb{Z}_2  = < \sigma > $ (Theorem A.5 and Corollary A.6 in \cite{St}). So, we get $Mod_N /Z(Mod_N) \cong \mathbb{Z} \rtimes \mathbb{Z}_2$.

In this case also $\mathcal{A}(N)$ is given by Scharleman in \cite{Sc}. Let $e, a, b, c$ be as in
Figure 3 (i). He proves that the vertices of $\mathcal{A}(N)$ are $\{[a], t_e ^n([b]), t_e ^n([c]): n \in \mathbb{Z} \}$, and the complex is as shown in Figure 3 (ii). It is easy to see that the automorphism group of this complex is $\mathbb{Z} \rtimes \mathbb{Z}_2$. So, we get $Aut(\mathcal{A}(N)) \cong Mod_N /Z(Mod_N) \cong \mathbb{Z} \rtimes \mathbb{Z}_2$.
\end{proof}

\begin{figure}
\begin{center}
\epsfxsize=2.3in \epsfbox{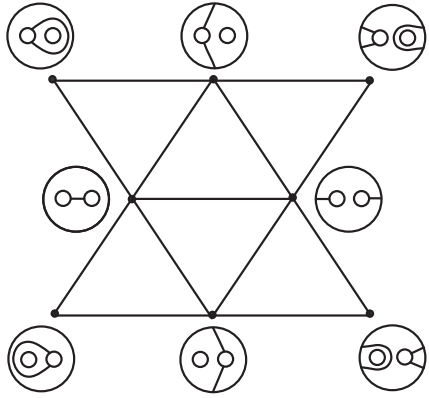}
\caption{Arc complex for $(g, r)=(1,2)$}
\label{arccomplexfor12}
\end{center}
\end{figure}

\begin{figure}
\begin{center}
\hspace{0.25cm} \epsfxsize=2.3in \epsfbox{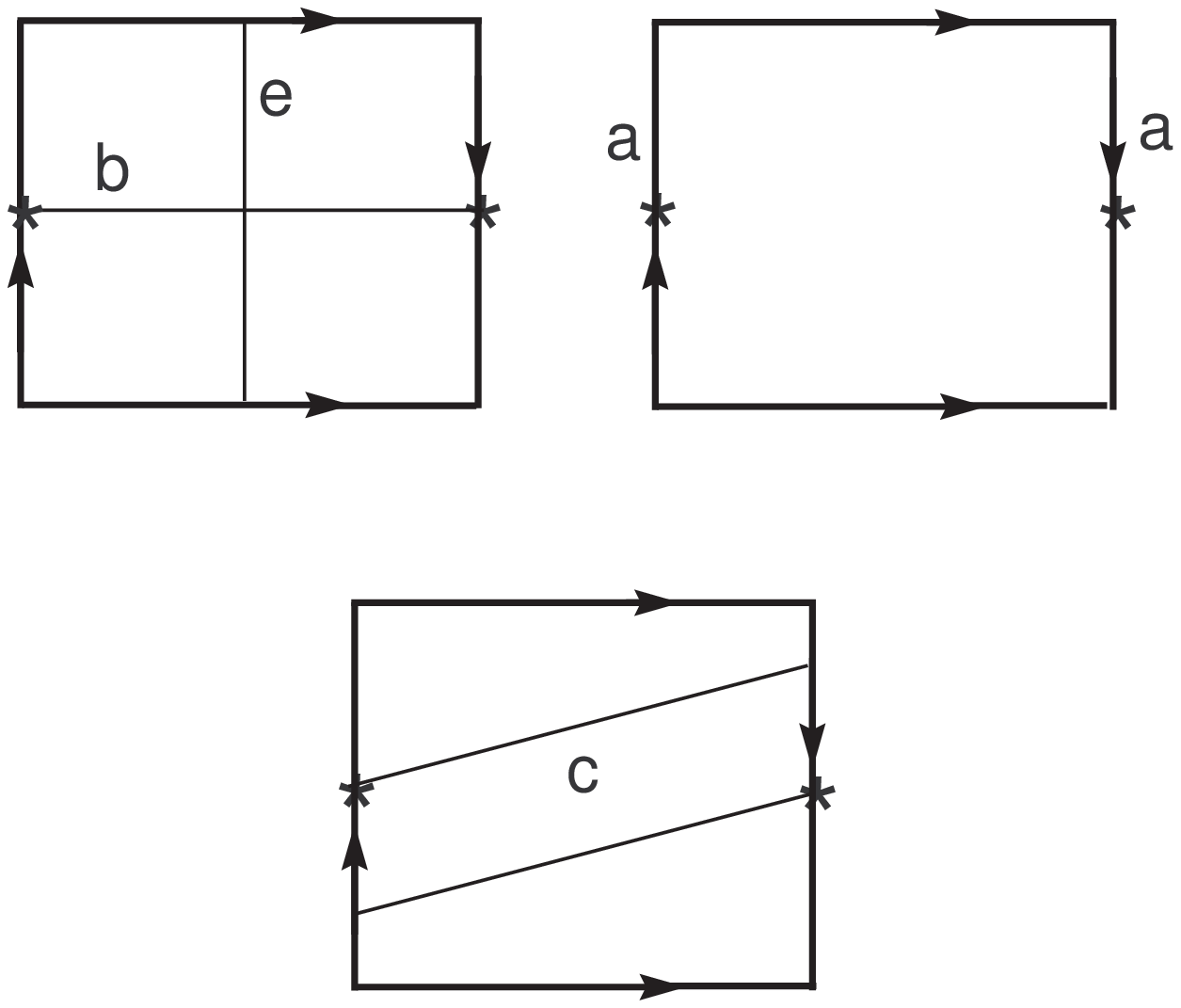}
\epsfxsize=3in \epsfbox{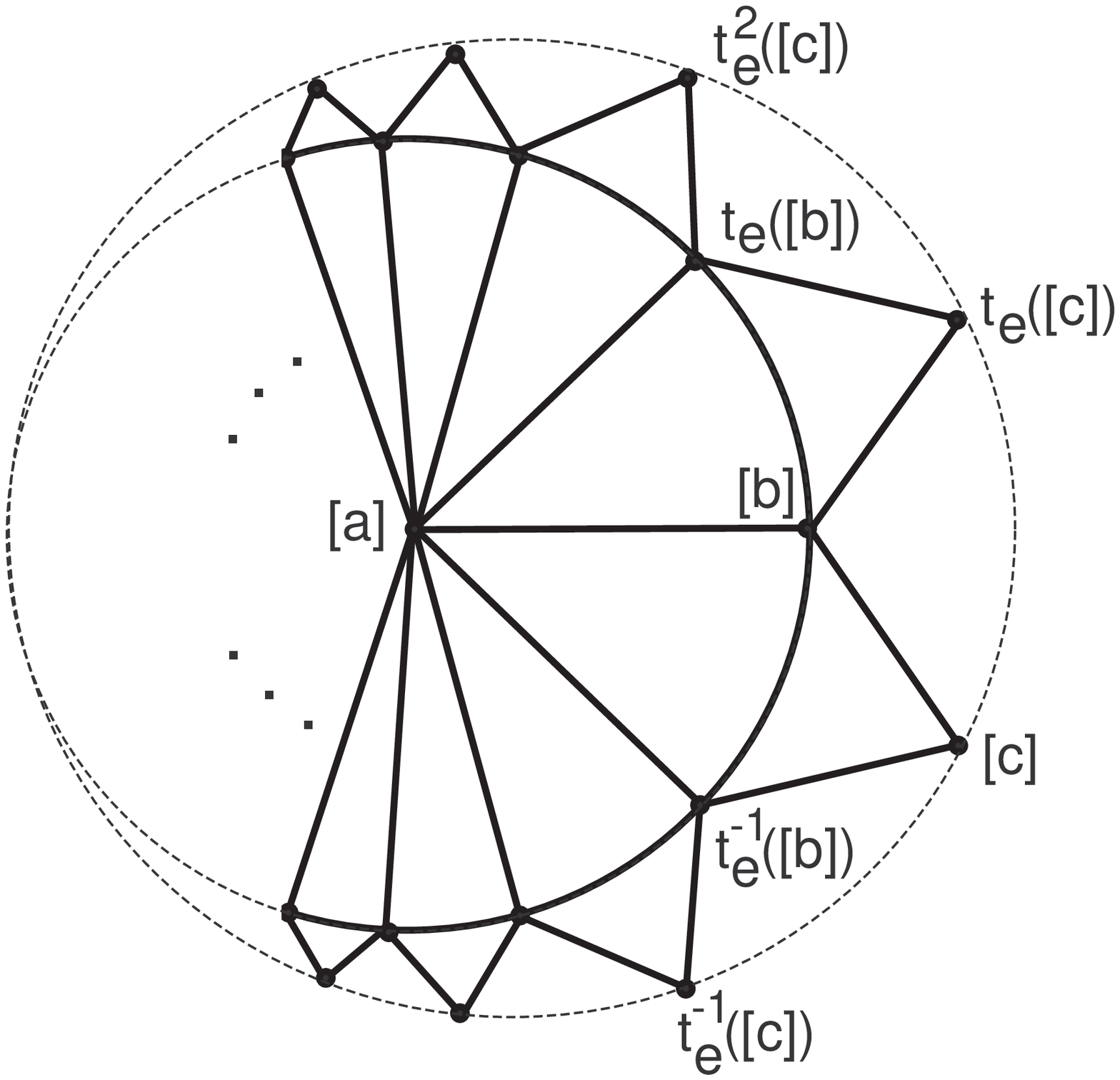}
\vspace{0.5cm}
\small{(i) \hspace{5cm} (ii) \hspace{1.1cm}}
\caption{Arc complex for $(g, r)=(2,1)$}
\label{arccomplexfor21}
\end{center}
\end{figure}

\begin{theorem}
\label{mainthm2} Let $N$ be a compact, connected, nonorientable surface of genus $g$ with
$r \geq 1$ boundary components. Suppose that $(g, r)=(1,1)$ or $(g, r)=(1,2)$ or $(g, r)=(2,1)$.
If $\lambda : \mathcal{A}(N) \rightarrow \mathcal{A}(N)$ is an injective simplicial map then $\lambda$
is induced by a homeomorphism $h : N \rightarrow N$.
\end{theorem}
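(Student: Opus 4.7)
The plan is to reduce each of the three cases to Theorem~\ref{mainthm1} by showing that an injective simplicial self-map $\lambda$ must in fact be an automorphism of $\mathcal{A}(N)$, whereupon Theorem~\ref{mainthm1} supplies a homeomorphism inducing $\lambda$.

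In case (i) with $(g,r)=(1,1)$, $\mathcal{A}(N)$ has a single vertex, so $\lambda$ is forced to be the identity and is induced by the identity homeomorphism of $N$. In case (ii) with $(g,r)=(1,2)$, the arc complex has only the eight vertices described by Scharleman and recalled in the proof of Theorem~\ref{mainthm1}. Any injective self-map of a finite set is a bijection, and simpliciality promotes this bijection on vertices to an automorphism of $\mathcal{A}(N)$. Theorem~\ref{mainthm1} then exhibits a homeomorphism $h:N\to N$ with $\lambda([a])=[h(a)]$ for every vertex $[a]$.

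In case (iii) with $(g,r)=(2,1)$, the vertex set $\{[a]\}\cup\{t_e^n([b]),t_e^n([c]):n\in\mathbb{Z}\}$ is infinite, so injectivity does not automatically yield surjectivity. My plan is to read off, from Figure~\ref{arccomplexfor21}(ii), combinatorial invariants—valence, structure of the link of a vertex, or how the adjacencies propagate along $\mathbb{Z}$—that single out $[a]$ and distinguish the two ``rails'' $\{t_e^n([b])\}_{n\in\mathbb{Z}}$ and $\{t_e^n([c])\}_{n\in\mathbb{Z}}$. From these invariants I would conclude that $\lambda([a])=[a]$ and that $\lambda$ sends each rail to a rail (possibly swapping them). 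The cross-rail adjacency pattern visible in Figure~\ref{arccomplexfor21}(ii) depends only on the difference of indices, so an injective simplicial map compatible with that pattern must act as a translation of $\mathbb{Z}$, possibly composed with the involution $n\mapsto -n$ and a rail swap; in particular it is bijective. Once $\lambda$ is known to be an automorphism, Theorem~\ref{mainthm1} finishes the argument.

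The main obstacle is case (iii): ruling out injective-but-not-surjective self-maps that ``skip'' indices along the infinite ladder. The leverage is that disjoint pairs of arcs must be sent to disjoint pairs, so once the images of $[a]$ and of one or two indexed vertices are pinned down by the invariants above, the local adjacency pattern forces the images of all remaining vertices rigidly along $\mathbb{Z}$, precluding any gap in the image and yielding surjectivity.
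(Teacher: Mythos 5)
Your proposal follows essentially the same route as the paper: for $(1,1)$ and $(1,2)$ finiteness of the vertex set makes any injection an automorphism, and for $(2,1)$ the paper likewise singles out $[a]$ (the unique vertex of infinite valence) and the two families $\{t_e^n([b])\}$ and $\{t_e^n([c])\}$ (valence $5$ and valence $2$, so in fact they cannot be swapped), then uses the rigidity of the adjacency pattern along the infinite ladder—via chains of top-dimensional simplices sharing codimension-one faces—to force surjectivity, before invoking Theorem~\ref{mainthm1}. Your sketch is correct in outline and matches this argument, with the paper simply carrying out the ``no skipped indices'' step more explicitly through the inductive chain of top-dimensional simplices.
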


\begin{proof}
For the cases $(g, r)=(1,1)$ and $(g, r)=(1,2)$, the proof follows from Theorem 2.1 as for both cases the arc complex has finitely many vertices so every injective simplicial map is an automorphism.

For $(g, r)=(2,1)$, we will prove that every injection is onto. Since $[a]$ is the only vertex which is connected to infinitely many vertices in the $\mathcal{A}(N)$ and $\lambda$ is injective $[a]$ is fixed by $\lambda$. Since the elements in $A= \{t_e ^n([b]): n \in \mathbb{Z} \}$ are the only vertices in $\mathcal{A}(N)$ which are connected to exactly five vertices by an edge and  $\lambda$ is injective, we have $\lambda(A) \subseteq A$. Similarly, since the elements in $B = \{t_e ^n([c]): n \in \mathbb{Z} \}$ are the only vertices in $\mathcal{A}(N)$ which are connected to exactly two vertices by an edge and $\lambda$ is injective, we have $\lambda(B) \subseteq B$. Let $w$ be a vertex of $\mathcal{A}(N)$ different from $[a]$.
Let $\Delta$ be a top dimensional simplex containing $[a]$ in $\mathcal{A}(N)$. Since $\lambda$ is injective and $\lambda([a])=[a]$, we see that $\lambda(\Delta)$ corresponds to a top dimensional simplex, $\Delta'$, containing $[a]$ in $\mathcal{A}(N)$. If $w$ is a vertex of $\Delta'$, then $w$ is in the image, so we are done. Suppose that $w$ is not a vertex of $\Delta'$. We will consider two cases:

Case (i): Suppose $w \in A$. Take a top dimensional simplex $\Delta''$ containing $w$ and $[a]$. It is easy to see that there exists a chain
$\Delta' = \Delta_0', \Delta_1', \cdots, \Delta_n' = \Delta''$ of top dimensional simplices containing $a$ in $\mathcal{A}(N)$, connecting $\Delta'$ to $\Delta''$ in such a way that the consecutive simplices
$\Delta_i', \Delta_{i+1}'$ have exactly one common face of codimension 1. Let $\Delta_1, \Delta_2$ be two distinct top dimensional simplices in $\mathcal{A}(N)$ different from $\Delta$, such that they both contain $[a]$ and also they have one common face of codimension 1 with $\Delta$. Since $\lambda$ is injective, top dimensional simplices are sent to top dimensional simplices by $\lambda$. Every edge containing $[a]$ is contained in exactly two top dimensional simplices $\mathcal{A}(N)$. Since $\lambda([a])=[a]$ and $\lambda(\Delta) = \Delta'$ either $\lambda(\Delta_1) = \Delta_1'$ or $\lambda(\Delta_2) = \Delta_1'$. This shows that every vertex of $\Delta_1'$ is in the image. By an inductive argument, using the above sequence we see that $w$ is in the image of $\lambda$.

Case (ii): Suppose $w \in B$. Take a top dimensional simplex $\Delta''$ containing $w$. Let $z, t \in A$ be the distinct vertices of $\Delta''$ different from $w$. By case (i) there exist vertices $x, y \in A$ such that $z=\lambda(x), t=\lambda(y)$. Consider the top dimensional simplex $\Delta'''$ containing $x$ and $y$ but not $[a]$. Let $k$ the vertex of $\Delta'''$ different from $x$ and $y$. Since $\lambda$ is injective and $\lambda([a])=[a]$, we see that $\lambda(\Delta''')$ is a top dimensional simplex containing $z$ and $t$ but not containing $[a]$. Since $\Delta''$ is the only top dimensional simplex containing $z$ and $t$ but not containing $[a]$, we see that $\Delta'' = \lambda(\Delta''')$ and $w=\lambda(k)$. Hence, $\lambda$ is onto. By using Theorem 2.1 we see that $\lambda$ is induced by a homeomorphism.
\end{proof}

\section{Triangulations}

In this section we assume that $(g, r) \neq (1,1)$,  $(g, r) \neq (1,2)$ and $(g, r) \neq (2,1)$. This is equivalent to having $g+r \geq 4$ since $r \geq 1$, $g \geq 1$.

Let $\lambda : \mathcal{A}(N) \rightarrow \mathcal{A}(N)$ be an injective simplicial map. We will prove some properties of $\lambda$. First we give some definitions.

Let $T$ be a set of pairwise disjoint nonisotopic arcs on $N$. $T$ is called a {\it triangulation of} $N$ if each component $\Delta$ of the surface $N_T$, obtained from $N$ by cutting $N$ along $T$, is a disc with boundary $\partial \Delta$ equal to a union of arcs, $a, b, c, d, e$, and $f$, where $a$, $b$, and $c$ correspond to elements of $T$ and $d$, $e$, and $f$ correspond
to arcs or circles in $\partial N$. $\Delta$ is called a {\it triangle of} $T$, and $a, b, c$ are called sides of $\Delta$. If $a$, $b$, and $c$
correspond to distinct elements of $T$, then $\Delta$ is called an {\it an embedded triangle of} $T$. Otherwise, $\Delta$ is called {\it a non-embedded triangle of} $T$. The phrase {\it triangle of} $T$ will also
be used to refer to the image of any component $\Delta$ of $N_T$, under the natural quotient map $q : N_T \rightarrow N$, together with the images of the
arcs $a$, $b$, and $c$ of $\partial \Delta$. Two distinct triangles of a triangulation $T$ are called {\it adjacent} w.r.t. $T$ if they have a common side. The images of $a, b$ and $c$ will also be called as sides of the triangle.

Let $T$ be a triangulation of $N$. Let $[T]$ be the set of isotopy classes of elements of $T$. Note that $[T]$ is a maximal simplex of $\mathcal{A}(N)$. Every maximal simplex $\sigma$ of $\mathcal{A}(N)$ is equal to $[T]$ for some triangulation $T$ of $N$.

Let $\lambda: \mathcal{A}(N) \rightarrow \mathcal{A}(N)$ be an injective simplicial map. Then $\lambda([T]) = [T']$ for some triangulation $T'$ of $N$ and $\lambda$ restricts to a correspondence $\lambda| : [T] \rightarrow [T']$
on the isotopy classes. Note that the triangulation $T'$ of $N$ is well defined up to isotopy on $N$. By using Euler
characteristic arguments we see that the number of arcs in a triangulation is $3g+3r-6$, and the number of triangles in a triangulation is $2g+2r-4$.

\begin{figure}
\begin{center}
\epsfxsize=1.3in \epsfbox{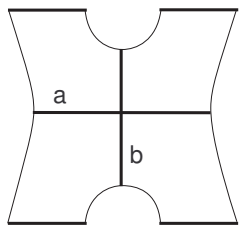}
\caption{Arcs intersecting once}
\label{intonefig}
\end{center}
\end{figure}

\begin{lemma}
\label{intone} If $\alpha$ and $\beta$ are two vertices in $\mathcal{A}(N)$ such that $i(\alpha, \beta)=1$ then $i(\lambda(\alpha), \lambda(\beta))=1$.
\end{lemma}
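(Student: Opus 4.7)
The plan is to realise the condition $i(\alpha,\beta)=1$ as an elementary flip between two triangulations of $N$ and then to transport that flip through $\lambda$. First I would take a regular neighbourhood $U$ of $\alpha\cup\beta$ in $N$. Since $\alpha$ and $\beta$ meet transversely in a single interior point, $U$ is a topological disc whose boundary alternates between four short arcs on $\partial N$ (around the four endpoints of $\alpha$ and $\beta$) and four arcs $\delta_1,\delta_2,\delta_3,\delta_4$ in the interior of $N$, one in each ``corner'' of the X-shape $\alpha\cup\beta$. Using the standing assumption $g+r\geq 4$, and enlarging or re-routing $U$ if needed, I would arrange the four $\delta_i$ to be essential, pairwise non-isotopic, and non-isotopic to either $\alpha$ or $\beta$; degenerate corners that would bound a disc, Möbius band, or cylinder joining two boundary components across $\alpha\cup\beta$ are handled by pushing the offending $\delta_i$ once around a handle or cross-cap.

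Next I would extend $\{\alpha,\delta_1,\delta_2,\delta_3,\delta_4\}$ to a triangulation $T$ of $N$. By construction, the two triangles of $T$ incident to $\alpha$ are embedded, distinct, and their union is the disc $U$, whose remaining $T$-sides are the $\delta_i$. In particular, $\beta$ is the opposite diagonal of the embedded quadrilateral $U$, so $T':=(T\setminus\{\alpha\})\cup\{\beta\}$ is a triangulation of $N$ agreeing with $T$ on every other arc.

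Now I would apply $\lambda$. Since $\lambda$ is injective and simplicial, $\lambda([T])$ and $\lambda([T'])$ are top-dimensional simplices of $\mathcal{A}(N)$, so correspond to triangulations $\widetilde T,\widetilde T'$ of $N$ with $\widetilde T\setminus\widetilde T'=\{[\lambda(\alpha)]\}$ and $\widetilde T'\setminus\widetilde T=\{[\lambda(\beta)]\}$. Injectivity guarantees that $\lambda(\delta_1),\ldots,\lambda(\delta_4)$ are four pairwise non-isotopic arcs, all distinct from $\lambda(\alpha)$ and $\lambda(\beta)$. Let $\widetilde R\subseteq N$ be the component of $N$ cut along $\widetilde T\setminus\{[\lambda(\alpha)]\}$ that receives $\lambda(\alpha)$; then $\lambda(\alpha)$ splits $\widetilde R$ into the two $\widetilde T$-triangles adjacent to $\lambda(\alpha)$, and $\lambda(\beta)$ splits the same $\widetilde R$ into the two $\widetilde T'$-triangles adjacent to $\lambda(\beta)$. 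A short Euler-characteristic count, combined with the fact that the four distinct arcs $\lambda(\delta_i)$ must appear on $\partial\widetilde R$, forces $\widetilde R$ to be an embedded disc bounded on its interior side by exactly $\lambda(\delta_1),\ldots,\lambda(\delta_4)$ alternating with four arcs of $\partial N$; the two $\widetilde T$-triangles along $\lambda(\alpha)$ are therefore embedded and distinct, and $\widetilde R$ is an embedded quadrilateral with $\lambda(\alpha)$ as one diagonal and, by the same argument applied to $\widetilde T'$, $\lambda(\beta)$ as the other. The two diagonals of an embedded quadrilateral meet in exactly one interior point, so $i(\lambda(\alpha),\lambda(\beta))=1$.

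The main obstacle is this last topology/combinatorics step: namely ruling out that $\widetilde R$ is an annulus or Möbius band, and that the two triangles of $\widetilde T$ along $\lambda(\alpha)$ coincide as a single non-embedded triangle. This is genuinely more delicate in the non-orientable setting than in \cite{IrM}, and I expect it to require an enumeration of the handful of ways a single arc can be flipped inside a triangulation of a non-orientable surface, together with the Euler-characteristic bookkeeping mentioned above, to conclude that only the embedded-quadrilateral case is compatible with four distinct $\lambda(\delta_i)$ appearing on $\partial\widetilde R$.
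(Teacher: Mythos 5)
Your overall strategy is the same as the paper's: realize the pair as a flip, i.e.\ complete $a$ to a triangulation $T$ in which $b$ is the arc replacing $a$, push the two maximal simplices $[T]$ and $[(T\setminus\{a\})\cup\{b\}]$ through $\lambda$, and read off the intersection number from two triangulations of $N$ that differ in a single arc. However, both places where you add detail beyond that skeleton have genuine gaps. First, the corner arcs $\delta_1,\dots,\delta_4$ of a regular neighborhood of $\alpha\cup\beta$ are determined up to isotopy by the pair $(\alpha,\beta)$; you cannot ``re-route'' a degenerate one around a handle or cross-cap and still have $U$ an embedded quadrilateral whose two halves along $\alpha$ are faces of a triangulation with $\beta$ as the opposite diagonal. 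Once a $\delta_i$ is pushed off, the region it bounds together with the others is no longer a disc, extra arcs of $T$ are needed inside it (or the faces adjacent to $\alpha$ stop being triangles), and then $(T\setminus\{\alpha\})\cup\{\beta\}$ need not be a triangulation. For many pairs with $i(\alpha,\beta)=1$ some corner arc is boundary-parallel or two corner arcs are isotopic in $N$, so the quadrilateral with four distinct essential sides simply does not exist; the flip construction itself survives (degenerate corners get absorbed into $\partial N$ or produce non-embedded faces), but your later argument explicitly needs the four distinct $\delta_i$ inside $T$.

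The second and more serious gap is the claim that the four distinct classes $\lambda([\delta_i])$ ``must appear on $\partial\widetilde R$.'' At this stage $\lambda$ is only injective and simplicial: it preserves disjointness and sends maximal simplices to maximal simplices, but nothing is yet known about preservation of the combinatorics of a triangulation, i.e.\ about which arcs cobound a triangle. That is precisely what the subsequent lemmas of the paper establish, and they are proved \emph{using} the present lemma, so assuming it here is circular. Consequently you cannot identify the sides of the triangles of $\widetilde T$ adjacent to $\lambda(\alpha)$ with the $\lambda(\delta_i)$, and you cannot rule out in this way that $\lambda(\alpha)$ is the doubled side of a non-embedded face of $\widetilde T$, in which case $\widetilde R$ is an annulus or M\"obius band rather than a quadrilateral. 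What does come cheaply is $i(\lambda(\alpha),\lambda(\beta))\neq 0$: otherwise $\lambda([T])\cup\{\lambda([\beta])\}$ would be a simplex with $3g+3r-5$ vertices, exceeding the size of a maximal simplex. The remaining point, that the two arcs in which the image triangulations differ have representatives meeting exactly once, is exactly the step the paper compresses into ``it is easy to see,'' and it requires a direct analysis of the region of $N$ obtained by cutting along $\widetilde T\setminus\{\lambda(\alpha)\}$ that contains $\lambda(\alpha)$ (disc, annulus, or M\"obius band), not an appeal to the images of the corner arcs.
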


\begin{proof} Let $a$ and $b$ be representatives of $\alpha$ and $\beta$ respectively such that $a$ and $b$ intersect once as in Figure \ref{intonefig}. We complete $a$ to a triangulation $T_1$ of $N$ such that $(T_1 \setminus \{a\}) \cup \{b\}$ is also a triangulation of $N$. Let $T_2=(T_1 \setminus \{a\}) \cup \{b\}$. Let $\sigma_i$ be a simplex corresponding to triangulation $T_i$ for $i = 1, 2$. Let $\sigma_i'= \lambda(\sigma_i)$. Since $\lambda$ is injective, $\sigma_i'$ corresponds to triangulations on $N$ for $i=1,2$. Since $\sigma_1 \setminus \{[a]\} = \sigma_2 \setminus \{[b]\}$ and $\lambda$ is injective, we have $\sigma_1' \setminus \{\lambda([a])\} = \sigma_2' \setminus \{\lambda([b])\}$. Then it is easy to see that there exist  $a' \in \lambda(\alpha)$ and $b' \in \lambda(\beta)$ such that $a'$ and $b'$ intersect once.
\end{proof}

\begin{figure}
\begin{center}
\epsfxsize=3.7in \epsfbox{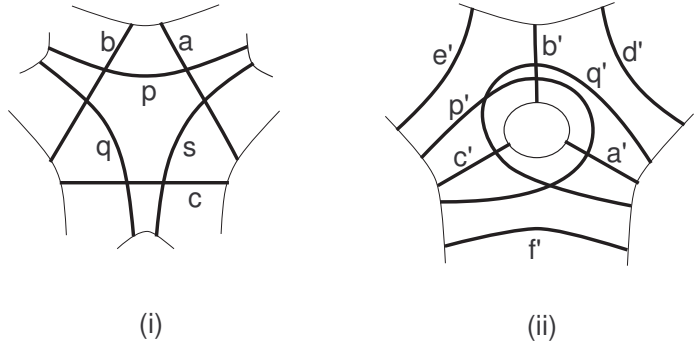}
\caption{Embedded triangle I}
\label{embedded 1}
\end{center}
\end{figure}

\begin{lemma}
\label{embedded} Let $a, b, c$ be properly embedded pairwise disjoint essential arcs on $N$ which form an embedded triangle on $N$. Then there exist $a' \in \lambda([a]), b' \in \lambda([b]), c' \in \lambda([c])$ such that $a', b', c'$ form an embedded triangle on $N$.
\end{lemma}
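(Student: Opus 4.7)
My plan is to extend $\{a,b,c\}$ to a triangulation $T$ of $N$ in which the disc $\Delta$ bounded by $a, b, c$ appears as one of the triangles of $T$, chosen so that each of $a, b, c$ lies between two embedded triangles of $T$. Such a $T$ exists since $\Delta$ is already an embedded disc and its complement can be triangulated freely. Applying $\lambda$ yields a triangulation $T'$ of $N$ with $[T']=\lambda([T])$; let $a', b', c' \in T'$ be representatives of $\lambda([a]), \lambda([b]), \lambda([c])$, which are three distinct arcs by the injectivity of $\lambda$. The goal is to show $\{a', b', c'\}$ are the sides of an embedded triangle of $T'$.

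The main tool is the flip operation. For any arc $\alpha$ of $T$ situated between two embedded triangles, its flip $\alpha^*$ is the unique isotopy class of arcs such that $(T \setminus \{\alpha\}) \cup \{\alpha^*\}$ is again a triangulation and $i(\alpha, \alpha^*) = 1$; geometrically, $\alpha^*$ is the other diagonal of the quadrilateral formed by merging the two triangles adjacent to $\alpha$. By Lemma \ref{intone}, $i(\lambda([\alpha^*]), \alpha') = 1$, and since $\lambda$ is simplicial, $\lambda([\alpha^*])$ is disjoint from every arc of $T' \setminus \{\alpha'\}$; hence $\lambda([\alpha^*])$ is the flip of $\alpha'$ in $T'$. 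In short, $\lambda$ transports flips in $T$ to flips in $T'$.

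To conclude, I analyze the three flips at $a$, $b$, $c$ together. Since $\Delta$ is embedded in $T$, removing $a$ merges $\Delta$ with the other triangle adjacent to $a$ into a quadrilateral $Q_a$ in which $b$ and $c$ appear as two non-adjacent sides; after the flip, $a^*$ meets $b$ and $c$ as sides of two different new triangles of $T_a=(T\setminus\{a\})\cup\{a^*\}$, and moreover no essential arc disjoint from $\{a,b,c\}$ can be isotoped into the disc $\Delta$. The symmetric statements for $b$ and $c$ form a rigid combinatorial pattern expressible purely in terms of simplices of $\mathcal{A}(N)$ and intersection data controlled by Lemma \ref{intone}. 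Transporting this pattern via $\lambda$ forces $\{a', b', c'\}$ to be the three sides of a single triangle of $T'$, and since these three arcs are distinct this common triangle is automatically embedded. The main obstacle is formalizing the ``rigid combinatorial pattern'' step: one has to verify that the image under $\lambda$ of the simultaneous flip data at $a$, $b$, $c$ cannot arise in $T'$ from any configuration other than $\{a',b',c'\}$ bounding an embedded triangle, and to handle carefully the subcase in which some auxiliary triangle of $T$ or $T'$ adjacent to the flipped arcs happens to be non-embedded (which can be arranged away by refining the initial choice of $T$).
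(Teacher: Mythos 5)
Your overall strategy (complete $\{a,b,c\}$ to a triangulation, push auxiliary arcs of intersection number one through $\lambda$ using Lemma \ref{intone}, and argue that the resulting constraints in $T'$ can only be satisfied if $a',b',c'$ bound a triangle) is the same kind of argument the paper uses, but your write-up stops exactly where the lemma's real content begins. The step you label ``the main obstacle'' --- verifying that the image under $\lambda$ of your flip data cannot arise from any configuration of $T'$ other than $a',b',c'$ bounding an embedded triangle --- is not a formalization detail; it is the entire proof. Knowing that $\lambda$ sends the flip of $a$ to the flip of $a'$ (and likewise for $b,c$) is purely local information at each arc: every arc sandwiched between two embedded triangles has a flip, so this by itself does not distinguish the configuration in which $a',b',c'$ share a triangle from configurations in which the three triangles of $T'$ containing the pairs $(a',b')$, $(b',c')$, $(c',a')$ are glued in some other way. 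To rule those out you would need to transport the mutual intersection pattern of the three flips $a^*,b^*,c^*$ (which pairwise cross inside $\Delta$) and then run a case analysis over all possible placements and gluings of the three image triangles on a nonorientable surface, including twisted identifications; none of this is done or even set up in your proposal, and the parenthetical claim that non-embedded adjacent triangles ``can be arranged away by refining the initial choice of $T$'' is likewise unargued.

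For comparison, the paper's proof chooses auxiliary arcs $p,q,s$ each of which crosses \emph{two} of $a,b,c$ exactly once, is disjoint from the rest of $T$, and such that $p,q,s$ are pairwise disjoint. Lemma \ref{intone} plus simpliciality then forces representatives $p',q',s'$ meeting $a',b'$; $b',c'$; $a',c'$ once each, pairwise disjoint, and disjoint from the rest of $T'$. This yields triangles $\Delta_1,\Delta_2,\Delta_3$ of $T'$ containing the pairs $(a',b')$, $(b',c')$, $(c',a')$, and the proof then checks every possible placement of the third sides and every possible gluing of these three triangles (the case analysis in Figures \ref{embedded 1}--\ref{embedded 4}); in each bad configuration two of $p',q',s'$ are forced to intersect, contradicting $i(\lambda([p]),\lambda([q]))=0$ (etc.). That disjointness of a triple of arcs each crossing two sides is the mechanism that makes the ``rigidity'' you appeal to actually provable, and the exhaustive gluing analysis is the part your proposal omits. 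As written, your argument has a genuine gap at its central step.
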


\begin{proof} Since $g + r \geq 4$ we can see that there exists a triangulation $T$ of $N$ containing $\{a, b, c\}$ such that the embedded triangle $\Delta$ formed by $a, b$ and $c$ on $N$ is adjacent to three distinct triangles $\Delta_a, \Delta_b$ and $\Delta_c$ of $T$ on $N$.

Note that $\partial \Delta_a$ is equal to union of arcs $a_1, x_1, a_2, x_2, a_3, x_3$ where $a_1, a_2, a_3$ correspond to elements of $T$ and each of $x_1, x_2, x_3$ correspond to an arc or a component of $\partial R$. W.L.O.G. we assume that $a_1$ corresponds to $a$ and $x_2$ is disjoint from $a_1$. Similarly, $\partial \Delta_b$ is equal to union of arcs $b_1, y_1, b_2, y_2, b_3, y_3$ where $b_1, b_2, b_3$ correspond to elements of $T$ and each of $y_1, y_2, y_3$ correspond to an arc or a component of $\partial R$. W.L.O.G. we assume that $b_1$ corresponds to $b$ and $y_2$ is disjoint from $b_1$. Likewise, $\partial \Delta_c$ is equal to union of arcs $c_1, z_1, c_2, z_2, c_3, z_3$ where $c_1, c_2, c_3$ correspond to elements of $T$ and each of $z_1, z_2, z_3$ correspond to an arc or a component of $\partial R$. W.L.O.G. we assume that $c_1$ corresponds to $c$ and $z_2$ is disjoint from $c_1$.

Let $p$ be a properly embedded essential arc connecting $x_2$ to $y_2$ and intersecting only $a$ and $b$ once and disjoint from each
element of $T \setminus \{a, b\}$ as shown in Figure \ref{embedded 1} (i). Let $q$ be a properly embedded essential arc connecting $y_2$ to $z_2$ and intersecting only $b$ and $c$ once and disjoint from $p$ and each element of $T \setminus \{b, c\}$ as shown in Figure \ref{embedded 1} (i). Let $s$ be a properly embedded essential arc connecting $x_2$ to $z_2$ and intersecting only $a$ and $c$ once and disjoint from $p \cup q$ and each element of $T \setminus \{a, c\}$ as shown in Figure \ref{embedded 1} (i). The arcs $p, q$ and $s$ form an embedded triangle on $N$. It is easy to see that $T_1 = (T \setminus \{a, b, c\}) \cup \{p, q, s\}$ is a triangulation on $N$.

Let $T'$ be a triangulation on $N$ such that $\lambda([T]) = [T']$. Let $a', b', c'$ be representatives of $\lambda([a]), \lambda([b]), \lambda([c])$ respectively, such that they are also in $T'$.

We have $i([p], [w]) = 0$ for every element $w \in T \setminus \{a, b\}$, $i([p], [a])=1$ and $i([p], [b])=1$. Since $\lambda$ is injective, by using Lemma \ref{intone} we see that $i(\lambda([p]), \lambda([w])) = 0$ for every element $w \in T \setminus \{a, b\}$, $i(\lambda([p]), \lambda([a]))= 1$ and $i(\lambda([p]), \lambda([b]))=1$. Hence, there exists $p' \in \lambda([p])$ such that $p'$ intersects $a'$ and $b'$ essentially once and does not intersect any other element of $T'$. Similarly, there exists $q' \in \lambda([q])$ such that $q'$ intersects $b'$ and $c'$ essentially once and does not intersect any other element of $T'$, and there exists $s' \in \lambda([s])$ such that $s'$ intersects $a'$ and $c'$ essentially once and does not intersect any other element of $T'$.

Since $p$, $q$, and $s$ are disjoint, $i([p],[q]) = i([q],[s]) = i([s],[p]) = 0$. Since $\lambda$ is a simplicial map, it follows that $i(\lambda([p]),\lambda([q])) = i(\lambda([q]),\lambda([s])) = i(\lambda([s]),\lambda([p])) = 0$. Hence, we may assume that $p'$, $q'$, and $s'$ are pairwise disjoint arcs on $N$.

Since the essential arc $p'$ on $N$ intersects $a'$ and $b'$ essentially once and is disjoint from the other elements of the triangulation $T'$ of $N$, there exists a triangle $\Delta_1$ of $T'$ on $N$ having sides corresponding to $a'$ and $b'$. Similarly, there exists a triangle $\Delta_2$ of $T'$ on $N$ having sides corresponding to $b'$ and $c'$, and a triangle $\Delta_3$ of $T'$ on $N$ having sides corresponding to $c'$ and $a'$. Let $d'$ be the side of $\Delta_1$ different from $a', b'$ in $T'$, let $e'$ be the side of $\Delta_2$ different from $b', c'$ in $T'$. Let $f'$ be the side of $\Delta_3$ different from $a', c'$ in $T'$.

If $d' = c'$ or $e' = a'$ or $f' = b'$, then $a', b', c'$ form an embedded triangle, so we are done. Assume that $d' \neq c'$, $e' \neq a'$, and $f' \neq b'$. Since $a'$, $b'$, and $c'$ are distinct arcs on $N$, we have $\Delta_1 \neq \Delta_2$, $\Delta_2 \neq \Delta_3$ and $\Delta_3 \neq \Delta_1$. Hence, $\Delta_1$, $\Delta_2$, and $\Delta_3$ are three distinct components of $N_{T'}$.

We need to consider cases depending on the placement
of the arcs $c'$, $e'$, and $f'$ on $\partial \Delta_2$ and $\partial \Delta_3$ and the possible gluing of the triangles $\Delta_1, \Delta_2, \Delta_3$ on $N$. These cases are shown in Figures \ref{embedded 1} (ii), \ref{embedded 2}, \ref{embedded 3} and \ref{embedded 4}.

\begin{figure}
\begin{center}
\epsfxsize=3.5in \epsfbox{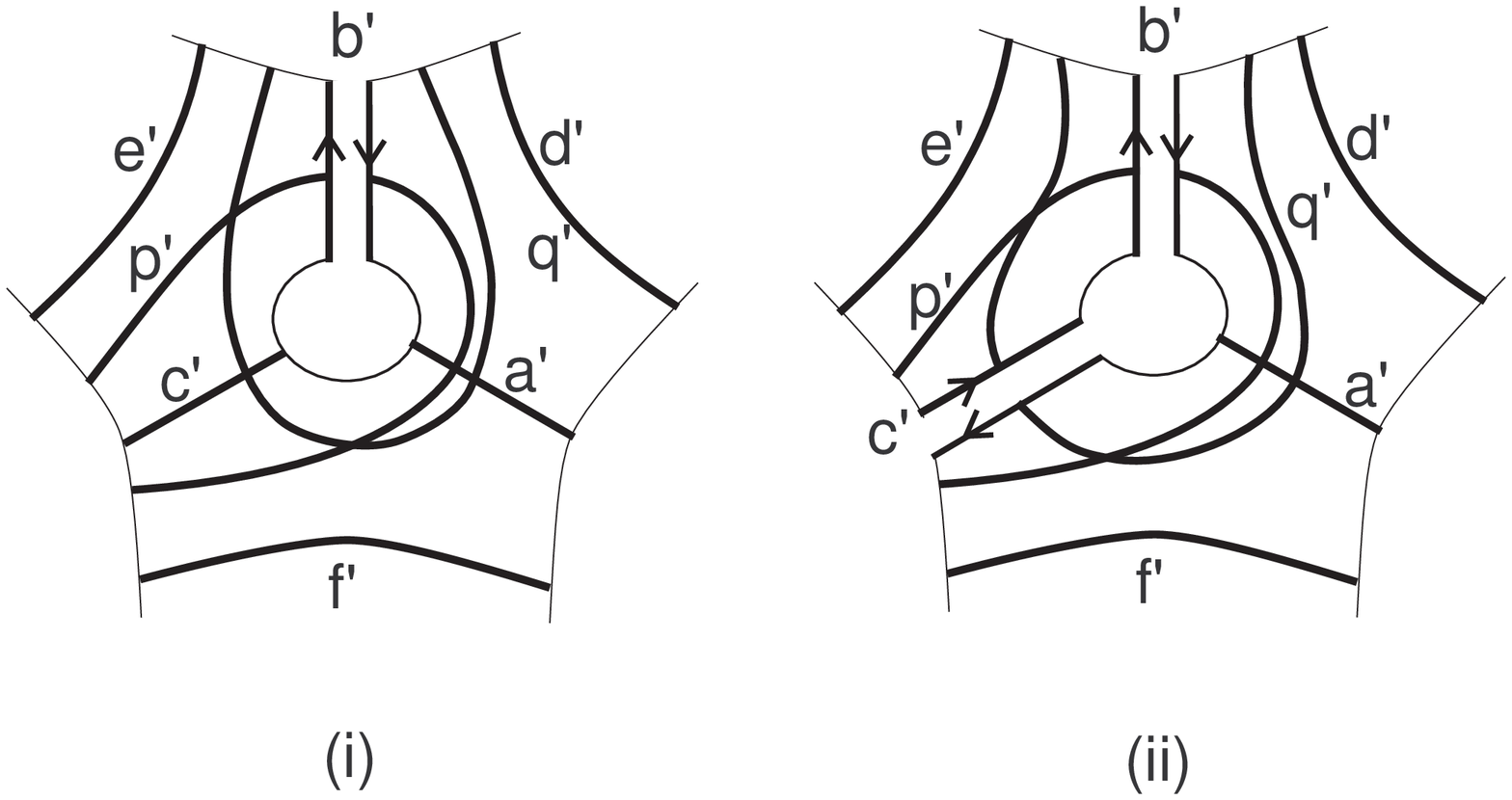} \hspace{0.1cm}
\epsfxsize=1.67in \epsfbox{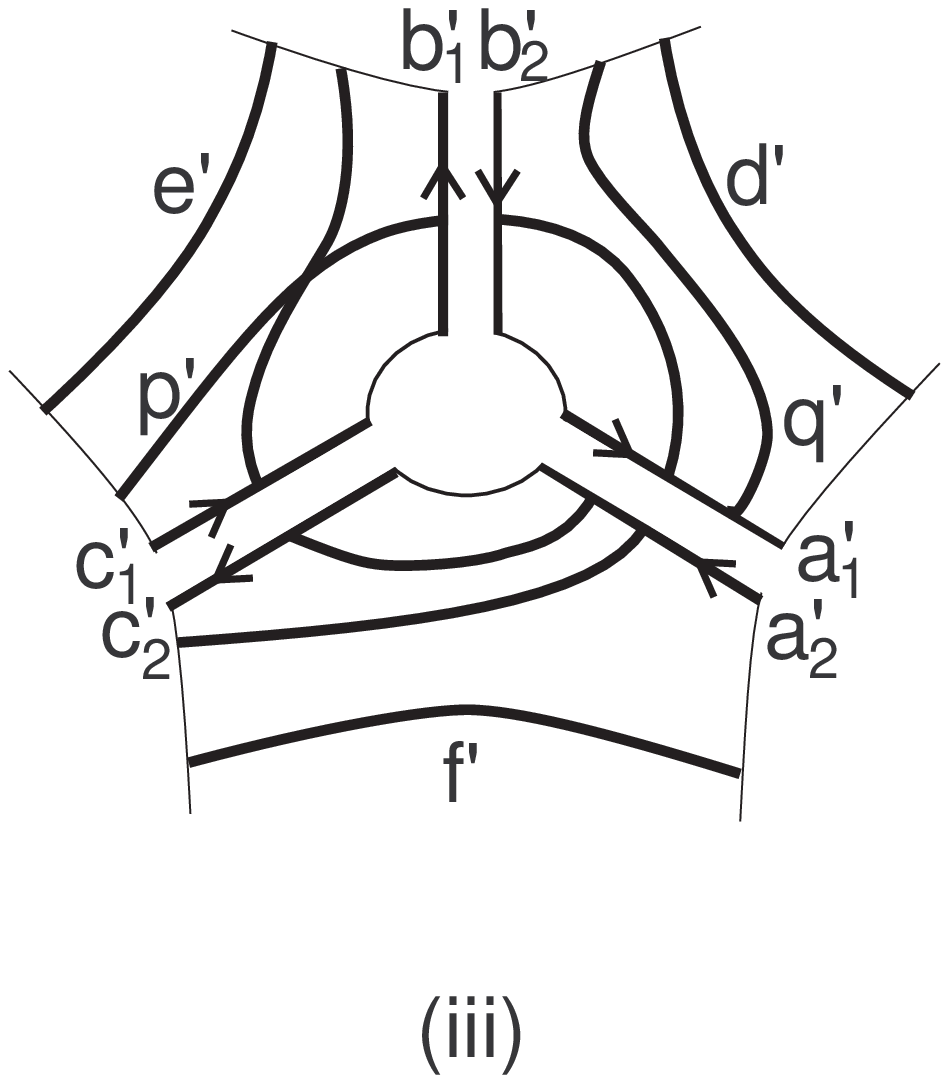}
\caption{Embedded triangle II}
\label{embedded 2}
\end{center}
\end{figure}

\begin{figure}
\begin{center}
\epsfxsize=3.7in \epsfbox{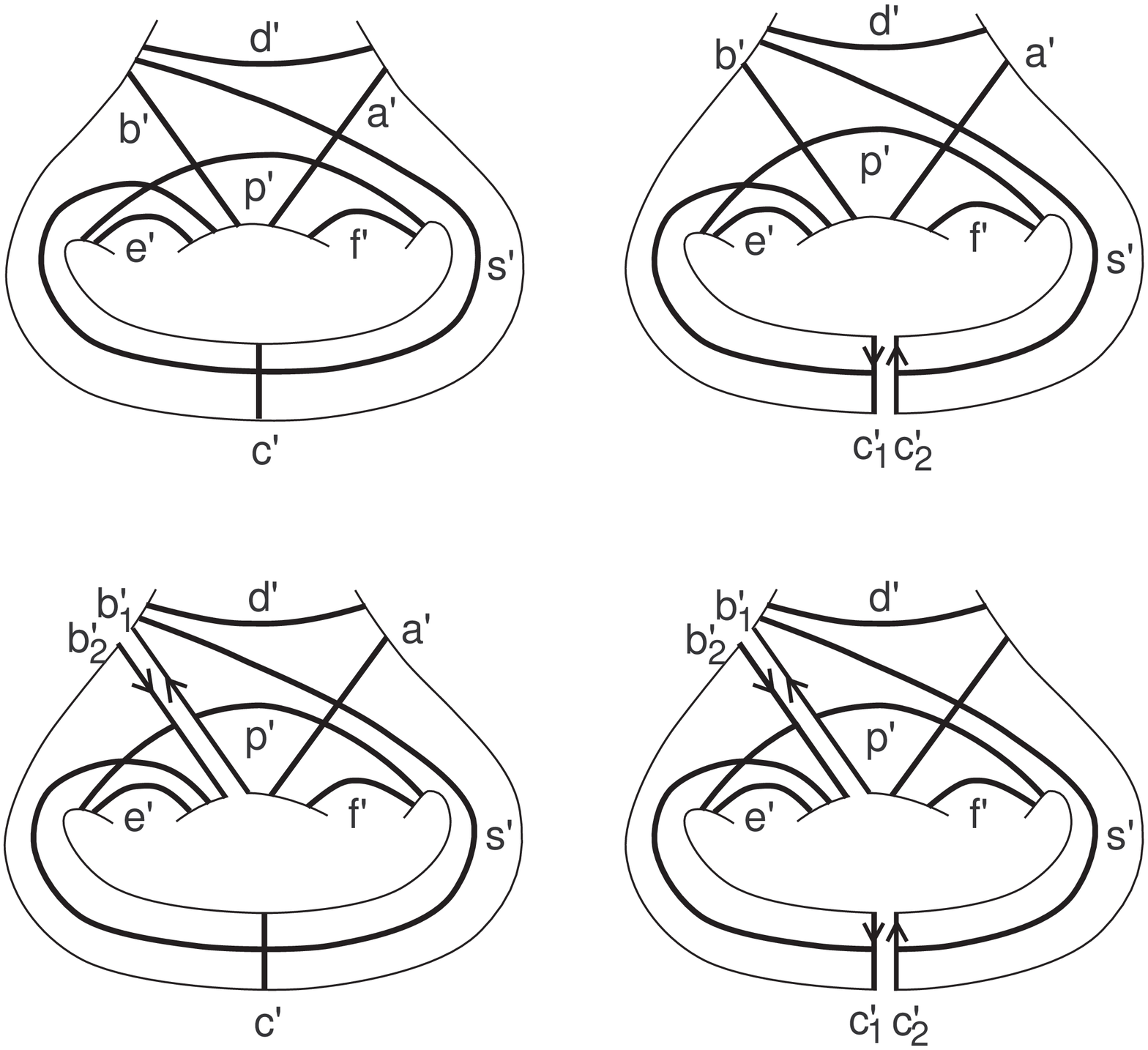}

\vspace{0.5cm}

\epsfxsize=3.7in \epsfbox{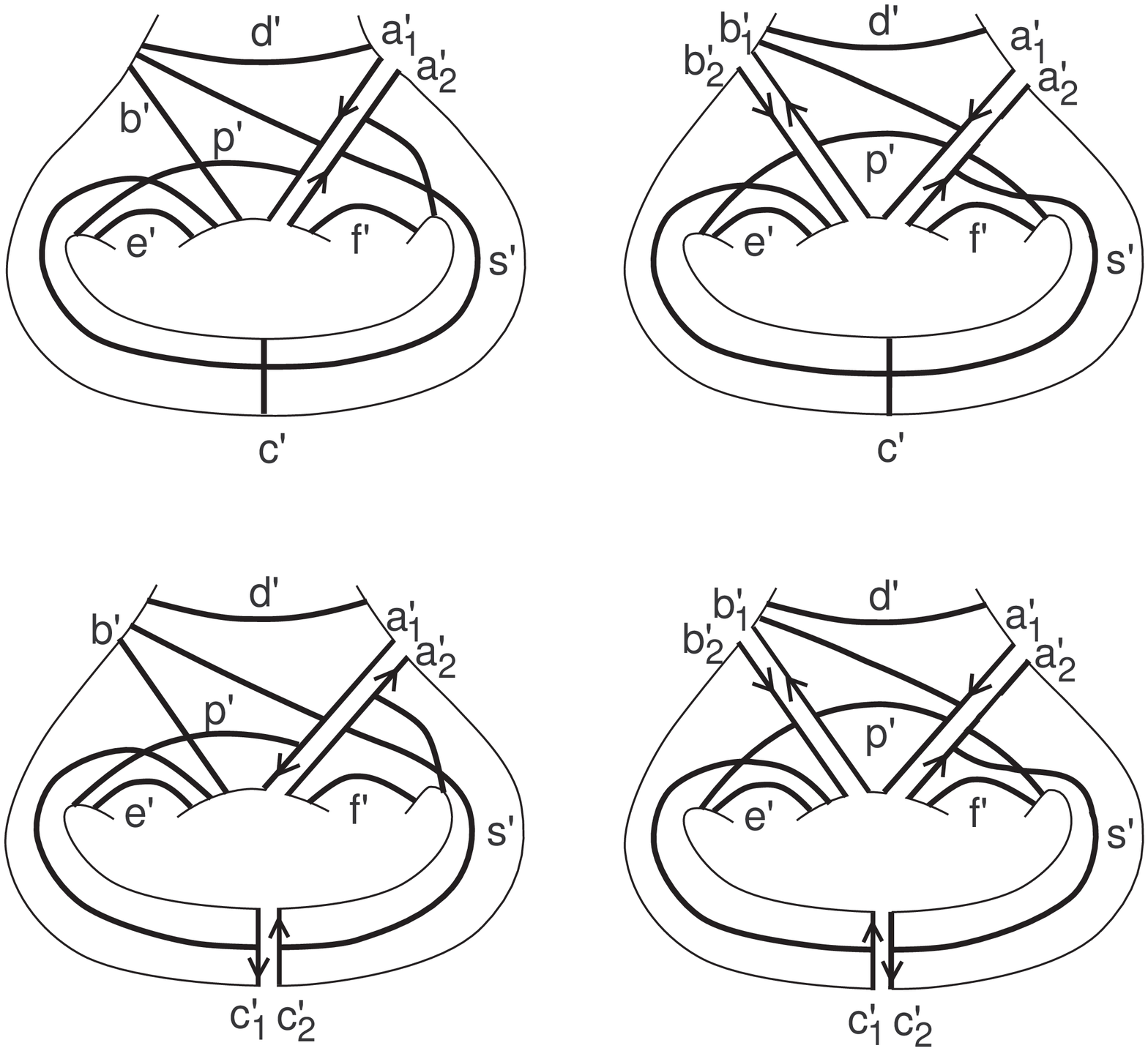}
\caption{Embedded triangle III}
\label{embedded 3}
\end{center}
\end{figure}

Case (i): Assume $a', b', c', d', e', f'$ are as shown in Figure \ref{embedded 1} (ii). Note that the arc $p'$ on $N$ representing $\lambda([p])$ intersects $b'$ and $a'$ once essentially and is disjoint from $e'$, $d'$, $f'$, and $c'$; and the arc $q'$ on $N$ representing $\lambda([q])$ intersects $b'$ and $c'$ once essentially and is disjoint from $e'$, $d'$, $f'$, and $a'$. But then we see that $p'$ and $q'$ intersect essentially, which gives a contradiction since $i(\lambda([p]), \lambda([q])) = 0$. Similarly, we can show that if $a', b', c', d', e', f'$ are as shown in Figure \ref{embedded 2} (i) or (ii) or (iii) we get a contradiction (see
Figure \ref{embedded 2}).

Case (ii): Assume $a', b', c', d', e', f'$ are as shown in Figure \ref{embedded 3} (i). As before, it follows that the arc $p'$ on $N$ representing $\lambda([p])$ intersects $b'$ and $a'$
once essentially and is disjoint from $e'$, $d'$, $f'$, and $c'$; and the arc $s'$ on $N$ representing $\lambda([s])$ intersects $a'$ and $c'$
once essentially and is disjoint from $e'$, $d'$, $f'$, $b'$. But then we see that
$p'$ and $s'$ intersect essentially
which gives a contradiction, since $i(\lambda([p]),\lambda([s])) = 0$. Similarly, we can show that if $a', b', c', d', e', f'$ are as shown in the remaining parts of Figure \ref{embedded 3} we get a contradiction.

\begin{figure}
\begin{center}
\epsfxsize=3.5in \epsfbox{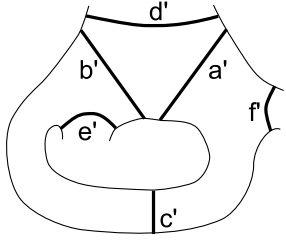}
\caption{Embedded triangle IV}
\label{embedded 4}
\end{center}
\end{figure}

Case (iii): If $a', b', c', d', e', f'$ are as shown in Figure \ref{embedded 4}, we get a contradiction as in the proof Case (ii). In the remaining cases, where the placement of the arcs on the triangles are as in Figure \ref{embedded 4}, we can see all possible gluing of these triangles formed by $e', b', c'$ and $a', c', f'$ and $a', b', d'$ as we saw in Figure \ref{embedded 3}, and get contradictions similarly as in Case (ii).

Hence, we see that either $d' = c'$ or $e' = a'$ or $f' = b'$ and, hence, we are done. \end{proof}\\

A non-embedded triangle is called a {\it regular non-embedded triangle} if it is homeomorphic to an annulus on the surface. In Figure \ref{regularfig} (i) we see a regular non-embedded triangle formed by $a$ and $b$. A non-embedded triangle is called a {\it twisted non-embedded triangle} if it is homeomorphic to a Mobius band on the surface. In Figure \ref{regularfig} (ii) we see a twisted non-embedded triangle formed by $a$ and $b$.

\begin{figure}
\begin{center}
\epsfxsize=1.2in \epsfbox{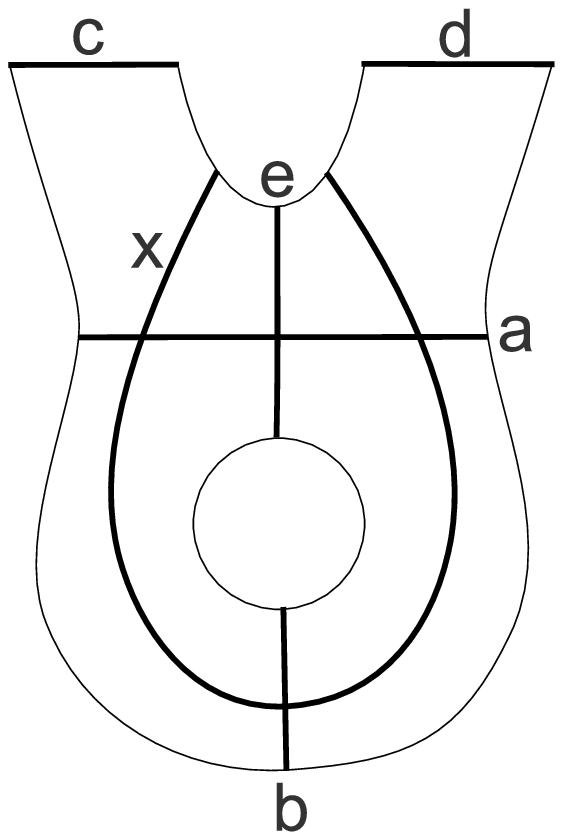}  \hspace{0.7cm} \epsfxsize=1.2in \epsfbox{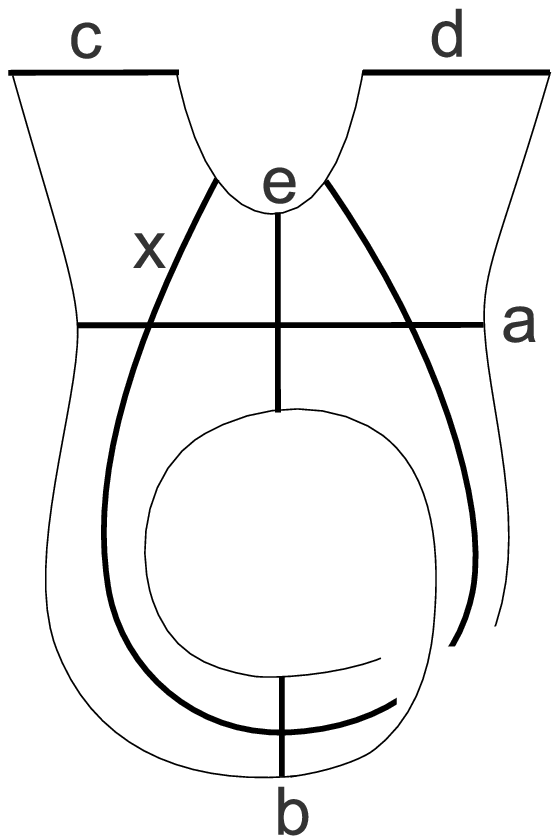}
\caption{(i) Arc configurations I \hspace{0.2cm} (ii) Arc configurations II}
\label{regularfig}
\end{center}
\end{figure}

\begin{lemma}
\label{regular} Let $a$ and $b$ be two properly embedded disjoint essential arcs on $N$ such that $a$ connects one boundary component to itself, and $a$ and $b$ form a regular non-embedded triangle on $N$. There exist $a' \in \lambda([a])$ and $b' \in \lambda([b])$ such that $a'$ connects one boundary component to itself, and $a'$ and $b'$ form a regular non-embedded triangle on $N$.\end{lemma}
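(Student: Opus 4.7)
Following the strategy of Lemma~\ref{embedded}, the plan is to extend $\{a,b\}$ to a triangulation $T$ of $N$ in which the regular non-embedded triangle $\Delta$ formed by $a$ and $b$ is adjacent across its $b$-side to a distinct triangle $\Delta_b$ of $T$. Since $\Delta$ is an annulus and its second boundary circle lies in $\partial N$, the hypothesis $g+r \ge 4$ leaves enough room to choose such a $T$. Let $T'$ satisfy $\lambda([T])=[T']$, and pick representatives $a' \in \lambda([a])$ and $b' \in \lambda([b])$ lying in $T'$.

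The first step is to show that $a'$ occurs twice on the boundary of a single triangle $\Delta'$ of $T'$ whose third side is $b'$. For this I would introduce a properly embedded essential arc $p$ on $N$ that crosses $a$ exactly once, is disjoint from $b$, and is disjoint from every other arc of $T$; such a $p$ is obtained by pushing a small segment on the $\partial N$-side of $\Delta$ through $a$ into the second copy of $\Delta$ sitting across $a$. By Lemma~\ref{intone}, $\lambda([p])$ has the analogous intersection pattern with $T'$ and with $b'$, which forces the two sides of $a'$ in $N_{T'}$ to belong to a single triangle $\Delta'$ whose remaining side is $b'$. In particular, $\Delta'$ is a non-embedded triangle with $a'$ occurring twice on its boundary, so $\Delta'$ is either regular or twisted.

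To rule out the twisted case and simultaneously to force $a'$ to connect one boundary component of $N$ to itself, I would introduce a second auxiliary arc $r$ that detects the annular topology of $\Delta$. Since $\Delta$ is an annulus, its boundary opposite $a$ is a circle made from $b$ together with arcs of $\partial N$, so pushing $b$ across $\Delta$ to the other side of $a$ yields an essential properly embedded arc $r$ on $N$ with $i([r],[a])=1$, $i([r],[b])=0$, $r$ disjoint from every other element of $T$, and such that $(T\setminus\{a\})\cup\{r\}$ is again a triangulation in which $r$ and $b$ form a regular non-embedded triangle. Applying $\lambda$ and Lemma~\ref{intone} forces $\lambda([r])$ to play the analogous role in $T'$: it must cross $a'$ once, lie inside $\Delta'$, and produce a triangulation when swapped with $a'$. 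This is possible only if $\Delta'$ is an annulus with second boundary circle in $\partial N$, i.e.\ the regular case, which also entails that $a'$ has both endpoints on one boundary component of $N$. In the M\"obius case, the orientation-reversing identification along $a'$ prevents the corresponding swap from yielding a triangulation, producing a contradiction.

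The principal obstacle is the case analysis needed to implement the final step rigorously: as in Lemma~\ref{embedded}, one must enumerate the possible local pictures of $\Delta'$ and the arcs of $T'$ incident to its boundary, and in each non-regular configuration derive a contradiction from a pair of arcs whose $\lambda$-images would need to be simultaneously disjoint and essentially transverse. I expect the bookkeeping to parallel Figures~\ref{embedded 1}--\ref{embedded 4}, with the annular-vs-M\"obius distinction playing the role formerly played by embedded-vs-non-embedded, and with the hypothesis that $a$ connects a boundary component to itself being exactly what permits the construction of the parallel arc $r$ used to expose the contradiction.
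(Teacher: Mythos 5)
Your plan has a genuine gap, starting with the local picture. Since $a$ connects a boundary component to itself, $a$ is the side that occurs only \emph{once} on the non-embedded triangle: the annulus $\Delta$ is the region between $a$ and a full boundary component of $N$, and it is $b$ (which joins two distinct boundary components) that appears twice on $\partial\Delta$. So there is no ``second copy of $\Delta$ sitting across $a$''; across $a$ lies a different triangle of $T$, and the statement your first step aims at ($a'$ occurring twice on a triangle with third side $b'$) is not even the right target. More seriously, the two deductions carrying your argument do not follow. An arc meeting exactly one arc of a triangulation once and missing all the others exists near \emph{every} arc of \emph{every} triangulation (push off into the two adjacent triangles and run to $\partial N$), so the only information Lemma \ref{intone} gives -- that $\lambda([p])$ meets $\lambda([a])$ once and misses the rest of $T'$ -- is realizable no matter how the triangles of $T'$ sit around $a'$; it cannot force $b'$ to be doubled on a single triangle, nor force that triangle to contain $a'$, nor distinguish annulus from M\"obius band. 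Likewise for $r$: $(T\setminus\{a\})\cup\{r\}$ is a maximal simplex, and the image of a maximal simplex under an injective simplicial map is automatically a maximal simplex, i.e.\ a triangulation, so ``the swap fails to yield a triangulation in the M\"obius case'' can never produce a contradiction. Note that the regular and twisted configurations have identical pairwise intersection data, so no argument using only preservation of $i=0$ and $i=1$ can separate them.

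What actually separates the two cases (and what the paper uses) is Lemma \ref{embedded}, i.e.\ preservation of \emph{embedded triangles} formed by well-chosen triples. The paper completes $a,b$ to the explicit six-arc configuration $\{a,b,c,d,e,x\}$ of Figure \ref{regularfig}(i), transports it using Lemmas \ref{embedded} and \ref{intone} (so that $b',c',e'$ and $b',d',e'$ are embedded triangles, $a'$ meets $e'$ once and misses $b',c',d'$, $a',c',d'$ is an embedded triangle, and $x'$ meets $b'$ once), and then runs through the finitely many gluings in Figure \ref{twisted?}: in each non-regular gluing either $a',c',d'$ or $c',x',d'$ fails to bound an embedded triangle, or two arcs whose classes are disjoint are forced to intersect, a contradiction. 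Your closing paragraph defers exactly this case analysis, but the auxiliary arcs $p$ and $r$ you introduce carry only intersection-number data and therefore cannot drive it; to repair the argument you would need auxiliary arcs that form embedded triangles with the configuration (as $c,d,e,x$ do in the paper) and invoke Lemma \ref{embedded} to rule out the twisted and degenerate gluings.
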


\begin{proof} We complete $a$ and $b$ to an arc configuration consisting of arcs $\{a, b, c, d, e, x\}$ as shown in Figure \ref{regularfig} (i). By using Lemma \ref{embedded} we can see that since $b, c, d, e$ are pairwise disjoint essential arcs and $b, c, e$ and $b, d, e$ form embedded triangles on $N$ there exist $b' \in \lambda([b])$, $c' \in \lambda([c])$, $d' \in \lambda([d])$, $e' \in \lambda([e])$ such that $b', c', d', e'$ are pairwise disjoint and $b', c', e'$ and $b', d', e'$ form embedded triangles on $N$. Since $a$ intersects $e$ once and $a$ is disjoint from each of $b, c, d$, by using Lemma \ref{intone} we can choose $a' \in \lambda([a])$ such that $a'$ intersects $e'$ once and $a'$ is disjoint from each of $b', c', d'$. Since $a, c, d$ form an embedded triangle $a', c', d'$ form an embedded triangle.

Now there are four cases to consider as shown in Figure \ref{twisted?}:

Case (i): In the first case  $a', b'$ form a regular non-embedded triangle on $N$.

Case (ii): Suppose the gluing of the triangles formed by $b', c', e'$ and $b', d', e'$ are as shown in \ref{twisted?} (ii). Then $a', c', d'$ do not form an embedded triangle as shown in the figure which gives a contradiction.

Case (iii): Suppose the gluing of the triangles formed by $b', c', d'$ and $a', b', c'$ are as shown in \ref{twisted?} (iii). We consider the arc $x$ intersecting $b$ once and being disjoint from each of $c, d, e$. By using Lemma \ref{intone} we can choose $x' \in \lambda([x])$ such that $x'$ intersects $b'$ once and $x'$ is disjoint from each of $c', d', e'$. By using Lemma \ref{embedded} we get a contradiction as $c, x, d$ form an embedded triangle but $c', x', d'$ do not form an embedded triangle on $N$ as shown in Figure \ref{twisted?} (iii).

Case (iv): We get a contradiction as in Case (iii).

\end{proof}

\begin{figure}
\begin{center}
\epsfxsize=2.5in \epsfbox{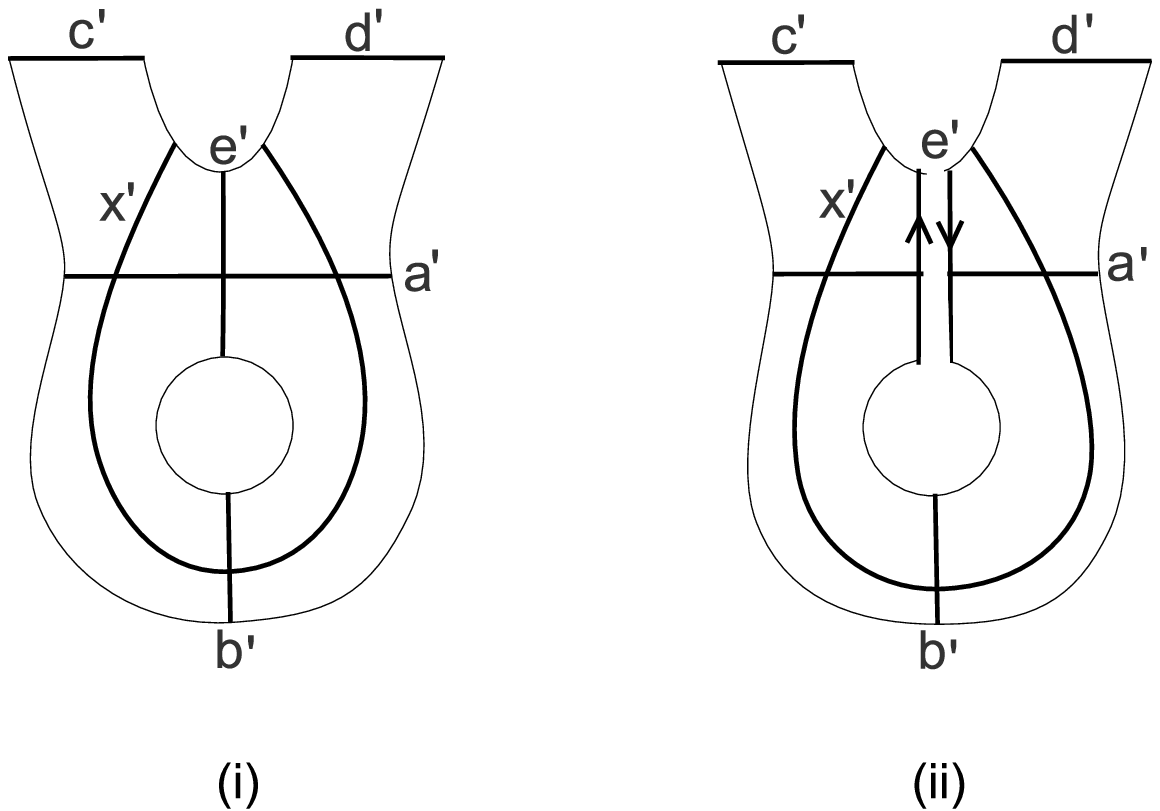}
\hspace{0.8cm}
\epsfxsize=2.35in \epsfbox{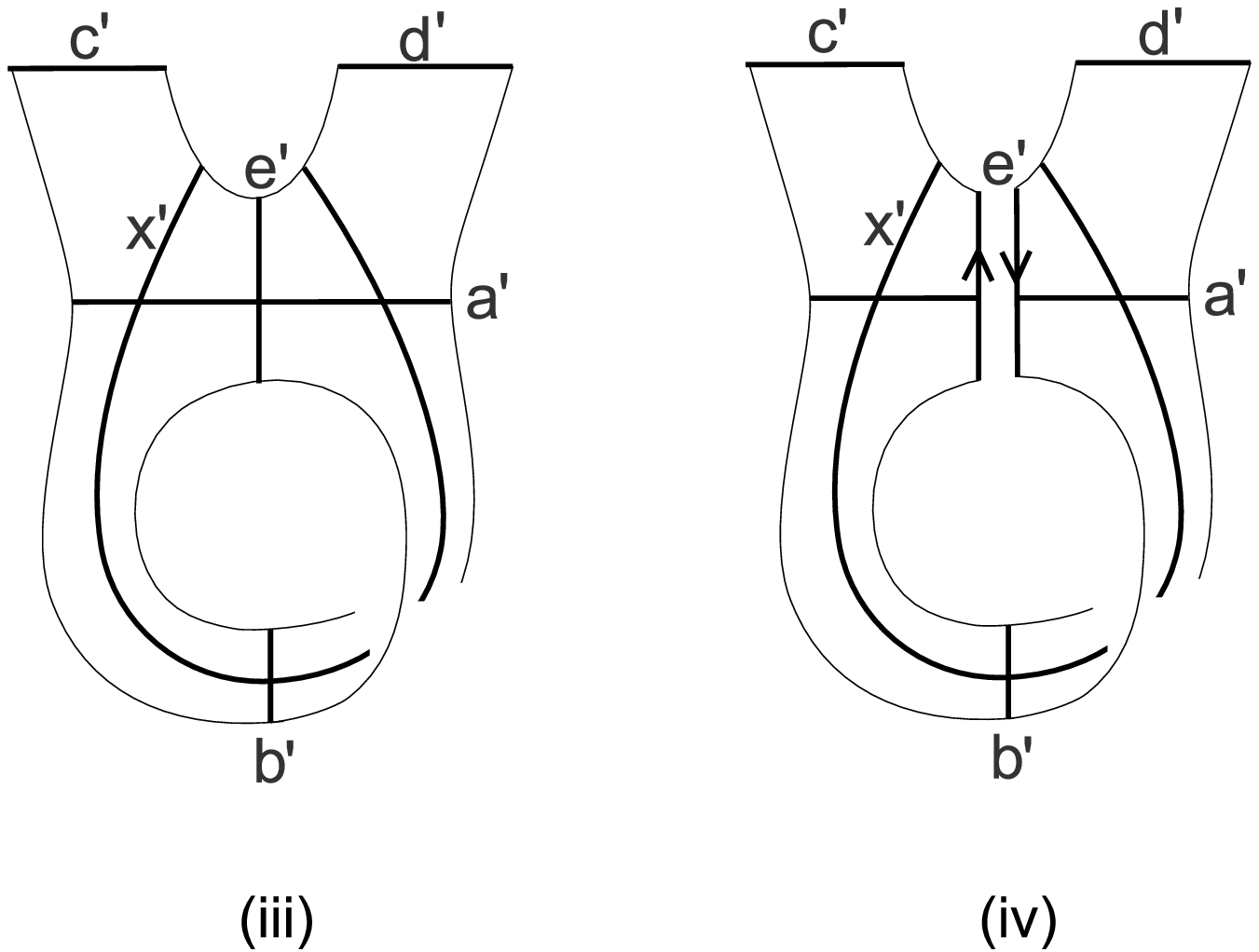}
\caption{Arc configurations III}
\label{twisted?}
\end{center}
\end{figure}

\begin{lemma}
\label{twisted}
Let $a$ and $b$ be two properly embedded disjoint essential arcs on $N$ such that $a$ connects one boundary component to itself, and $a$ and $b$ form a twisted non-embedded triangle on $N$. There exist $a' \in \lambda([a])$ and $b' \in \lambda([b])$ such that $a'$ connects one boundary component to itself and $a'$ and $b'$ form a twisted non-embedded triangle on $N$.
\end{lemma}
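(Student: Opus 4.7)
The plan is to imitate the proof of Lemma \ref{regular}, adapted to the Möbius band situation shown in Figure \ref{regularfig}(ii). First, I would complete $a$ and $b$ to an arc configuration $\{a, b, c, d, e, x\}$ drawn in a regular neighborhood of $\partial N \cup a \cup b$ together with a small extension into the rest of $N$ (which is available because $g+r\geq 4$). The arcs $c, d, e$ should be chosen so that $b, c, d, e$ are pairwise disjoint essential arcs with $b, c, e$ and $b, d, e$ bounding embedded triangles, while $a, c, d$ also bound an embedded triangle, $a$ is disjoint from $b, c, d$ and intersects $e$ exactly once. The auxiliary arc $x$ should intersect $b$ exactly once and be disjoint from $c, d, e$, and together with $c, d$ should bound an embedded triangle. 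The layout mirrors the regular case but uses the one-sided attachment along $b$ inside the Möbius band.

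Next, I would push the configuration forward by $\lambda$ using Lemma \ref{embedded} and Lemma \ref{intone} exactly as in the proof of Lemma \ref{regular}. Applying Lemma \ref{embedded} to the embedded triangles formed by $b, c, e$ and $b, d, e$ produces pairwise disjoint $b', c', d', e'\in\lambda([b]),\lambda([c]),\lambda([d]),\lambda([e])$ whose triples also bound embedded triangles. Lemma \ref{intone} then yields $a'\in\lambda([a])$ disjoint from $b', c', d'$ and intersecting $e'$ exactly once, and $x'\in\lambda([x])$ intersecting $b'$ exactly once and disjoint from $c', d', e'$. A further application of Lemma \ref{embedded} to $a, c, d$ and to $c, x, d$ forces $a', c', d'$ and $c', x', d'$ to bound embedded triangles in $N$.

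The proof concludes by a case analysis on the gluings of the triangles around $b'$ and $e'$ in $N$, parallel to the four cases of Lemma \ref{regular}. In each case in which $a'$ and $b'$ fail to form a twisted non-embedded triangle on $N$ with $a'$ connecting one boundary component to itself, one can produce a contradiction: either some triple such as $a', c', d'$ or $c', x', d'$ is forced not to bound an embedded triangle, contradicting Lemma \ref{embedded}, or the placement forces extra intersections of $x'$ with $c', d', e'$, contradicting Lemma \ref{intone}. The only surviving gluing is the twisted Möbius configuration, which is the desired conclusion.

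The main obstacle will be the case analysis itself: inside the twisted neighborhood the arc $b$ is approached from a single side in a twisted way, so fewer test arcs are naturally available than in the regular case, and one must check that the choice of $c, d, e, x$ is rich enough to distinguish the Möbius-band gluing in $\lambda([T])$ from all annular alternatives. I expect that, as in Lemma \ref{regular}, a handful of auxiliary arcs similar to $x$ (chosen to intersect one of $b, e$ once and be disjoint from the rest) will suffice, but the bookkeeping of which triples must or must not bound embedded triangles in each gluing pattern is where the work lies.
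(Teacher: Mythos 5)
Your proposal follows essentially the same route as the paper's proof: complete $a, b$ to the configuration $\{a, b, c, d, e, x\}$ of Figure \ref{regularfig}(ii), transport it with Lemmas \ref{intone} and \ref{embedded}, and then eliminate every gluing pattern except the twisted one by exhibiting a triple that must, but cannot, bound an embedded triangle. The only cosmetic difference is that in the twisted configuration the auxiliary embedded triangle involving $x$ is $c, x, e$ rather than $c, x, d$; otherwise your case analysis (the four gluings of Figure \ref{twisted?}, with the non-twisted ones ruled out via $a', c', d'$ or the $x'$-triangle failing to be embedded) is exactly the paper's argument.
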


\begin{proof} We complete $a$ and $b$ to an arc configuration consisting of arcs $\{a, b, c, d, e, x\}$ as shown in Figure \ref{regularfig} (ii). By using Lemma \ref{embedded} we can see that since $b, c, d, e$ are pairwise disjoint essential arcs and $b, c, e$ and $b, d, e$ form embedded triangles on $N$ there exist $b' \in \lambda([b])$, $c' \in \lambda([c])$, $d' \in \lambda([d])$, $e' \in \lambda([e])$ such that $b', c', d', e'$ are pairwise disjoint and $b', c', e'$ and $b', d', e'$ form embedded triangles on $N$. Since $a$ intersects $e$ once and $a$ is disjoint from each of $b, c, d$, by using Lemma \ref{intone} we can choose $a' \in \lambda([a])$ such that $a'$ intersects $e'$ once and $a'$ is disjoint from each of $b', c', d'$. Since $a, c, d$ form an embedded triangle $a', c', d'$ form an embedded triangle.

Now there are four cases to consider as shown in Figure \ref{twisted?}:

Case (i): Suppose the gluing of the triangles formed by $b', c', e'$ and $b', d', e'$ are as shown in Figure \ref{twisted?} (i). We consider the arc $x$ intersecting $b$ once and being disjoint from each of $c, d, e$. By using Lemma \ref{intone} we can choose $x' \in \lambda([x])$ such that $x'$ intersects $b'$ once and $x'$ is disjoint from each of $c', d', e'$. By using Lemma \ref{embedded} we get a contradiction as $c, x, e$ form an embedded triangle but $c', x', e'$ do not form an embedded triangle on $N$ as shown in Figure \ref{regularfig} (ii) and Figure \ref{twisted?} (i).

Case (ii): Suppose the gluing of the triangles formed by $b', c', d'$ and $a', b', c'$ are as shown in \ref{twisted?} (ii). We get a contradiction as in Case (i).

Case (iii) In this case  $a', b'$ form a twisted non-embedded triangle on $N$.

Case (iv): Suppose the gluing of the triangles formed by $b', c', e'$ and $b', d', e'$ are as shown in \ref{twisted?} (iv). We see that $a', c', d'$ do not form an embedded triangle as shown in the figure which gives a contradiction.
\end{proof}

\begin{lemma}
\label{9} Let $a, b, c, d, e$ be essential pairwise disjoint nonisotopic properly embedded arcs on $N$. Suppose that there exists a subsurface $K$ of $N$ and a homeomorphism $\phi: (K, a, b, c, d, e) \rightarrow (K_0, a_0, b_0, c_0, d_0, e_0)$ where $K_0$ and $a_0, b_0, c_0, d_0, e_0$ are as shown in Figure \ref{yeni}. There exist $a' \in \lambda([a]), b' \in \lambda([b]), c' \in \lambda([c]), d' \in \lambda([d]), e' \in \lambda([e])$, $K' \subseteq N$ and a homeomorphism $\chi: (K', a', b', c', d', e') \rightarrow (K_0, a_0, b_0, c_0, d_0, e_0)$.
\end{lemma}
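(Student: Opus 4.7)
The plan is to leverage the triangle-matching machinery already built up in Lemmas 2.3--2.6 and promote the local configuration $(K_0;a_0,b_0,c_0,d_0,e_0)$ piece by piece. First I would complete $\{a,b,c,d,e\}$ to a triangulation $T$ of $N$ whose triangles lying inside $K$ are exactly those visible in the model $K_0$; the condition $g+r\geq 4$ guarantees enough ``room'' outside $K$ to extend. Let $T'$ be the triangulation of $N$ with $\lambda([T])=[T']$, and let $a',b',c',d',e'\in T'$ be the (pairwise disjoint) representatives of $\lambda([a]),\ldots,\lambda([e])$. All further work is about showing that these five arcs sit inside some subsurface $K'\subseteq N$ in exactly the same combinatorial/topological pattern as in $K_0$.

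Next I would transport each triangle of the model to $N$ one at a time. Every triple among $\{a_0,\ldots,e_0\}$ that bounds an embedded triangle in $K_0$ yields, by Lemma \ref{embedded}, an embedded triangle on $N$ with sides among $\{a',\ldots,e'\}$. Every pair that bounds a regular non-embedded triangle in $K_0$ yields, by Lemma \ref{regular}, a regular non-embedded triangle on $N$ with the corresponding image arcs as sides; and every pair bounding a twisted non-embedded triangle is handled identically by Lemma \ref{twisted}. Any auxiliary arc from the model which meets a given $a_i$ exactly once can be transported by Lemma \ref{intone}, preserving its intersection pattern with the rest of the configuration.

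The third step is to assemble the triangles into a subsurface $K'$ and construct $\chi$. Set $K'$ to be the union on $N$ of the triangles produced above, together with the boundary arcs and boundary-of-$N$ segments joining them. On each triangle there is an obvious homeomorphism to the corresponding triangle in $K_0$ that sends $a'_i\mapsto a_{0,i}$; I would then piece these together along the shared sides. Where two model triangles share a side in $K_0$, the two image triangles share the corresponding image side in $T'$ (since $a',b',c',d',e'$ are distinct arcs of $T'$ and each $T'$-arc lies in at most two triangles of $T'$), so the gluings combine into a well-defined map $\chi\colon K'\to K_0$.

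The main obstacle, which is exactly the obstacle encountered in Lemmas \ref{embedded}--\ref{twisted}, is showing that the \emph{gluings} of neighboring triangles in $K'$ match those in $K_0$: along each shared side there are a priori two ways to identify the adjacent triangle, and only one gives a subsurface homeomorphic to $K_0$. I would rule out the wrong gluings by the same strategy used in those earlier proofs: introduce auxiliary arcs analogous to the $p,q,s,x$ of Lemmas \ref{embedded} and \ref{regular}, transport their intersection data with $\{a,\ldots,e\}$ via Lemma \ref{intone}, and observe that any incorrect gluing of two adjacent triangles forces two of these transported auxiliary arcs to meet essentially where their preimages were disjoint (or to bound an embedded triangle where their preimages do not), contradicting Lemmas \ref{intone} or \ref{embedded}. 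Eliminating every incorrect gluing in turn forces $K'$ to be homeomorphic to $K_0$ via a homeomorphism carrying $(a',b',c',d',e')$ to $(a_0,b_0,c_0,d_0,e_0)$, completing the proof.
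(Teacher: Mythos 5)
Your proposal takes essentially the same route as the paper: the paper transports the two embedded triangles $a,b,e$ and $c,d,e$ via Lemma~\ref{embedded}, transports the single auxiliary arc $f=\phi^{-1}(f_0)$ (meeting $e$ once and disjoint from $a,b,c,d$) via Lemma~\ref{intone}, and pins down the gluing along $e'$ by observing that $a,c,f$ form an embedded triangle, hence so do $a',c',f'$ --- precisely your auxiliary-arc strategy, realized with one specific arc rather than a general family. One small caution: the usable contradiction always runs in the direction ``the original arcs form an embedded triangle (or are disjoint) but the transported arcs cannot,'' not the reverse direction suggested in your parenthetical, since $\lambda$ is only injective.
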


\begin{figure}
\begin{center}
\epsfxsize=1.2in \epsfbox{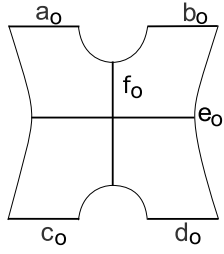}
\caption{Arc configurations IV}
\label{yeni}
\end{center}
\end{figure}

\begin{proof} Let $f_0$ be as shown in Figure \ref{yeni}. Let $f = \phi^{-1}(f_0)$. We see that $f$ is an essential properly embedded arc on $N$ such that $f$ intersects $e$ once and $f$ is disjoint from $a, b, c, d$. Since $a, b, c, d, e$ are disjoint, $a, b, e$ and $c, d, e$ form embedded triangles, by using that $\lambda$ is injective and the results of Lemma \ref{embedded}, we can choose
$a' \in \lambda([a]), b' \in \lambda([b]), c' \in \lambda([c]), d' \in \lambda([d]), e' \in \lambda([e])$ such that $a', b', c', d', e'$ are disjoint, and $a', b', e'$ and $c', d', e'$ form embedded triangles. Since $e, f$ intersect once and $f$ is disjoint from each of $a, b, c, d$, we can choose $f' \in \lambda([b])$ such that $e', f'$ intersect once and $f'$ is disjoint from each of $a', b', c', d'$. Since $a, c, f$ form an embedded triangle, $a', c', f'$ form an embedded triangle. Then the result of the lemma follows.
\end{proof}

\begin{lemma}
\label{9i} Let $a, b, c, d$ be essential pairwise disjoint nonisotopic properly embedded arcs on $N$. Suppose that there exists a subsurface $K$ of $N$ and a homeomorphism $\phi: (K, a, b, c, d) \rightarrow (K_0, a_0, b_0, c_0, d_0)$ where $K_0$ and $a_0, b_0, c_0, d_0$ are as shown in Figure \ref{9fig} (i). There exist $a' \in \lambda([a]), b' \in \lambda([b]), c' \in \lambda([c]), d' \in \lambda([d])$, $K' \subseteq N$ and a homeomorphism $\chi: (K', a', b', c', d') \rightarrow (K_0, a_0, b_0, c_0, d_0)$.
\end{lemma}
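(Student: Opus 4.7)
\emph{Plan.} The strategy mirrors the proof of Lemma \ref{9}: complete the given four-arc configuration to a larger arc system by adding one (or, if the shape of $K_0$ requires, two) auxiliary arc(s) inside $K$, transport the enlarged configuration through $\lambda$ using the lemmas already established, and finally reconstruct $K'$ as a regular neighborhood of the image arcs in $N$.

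First I would extract the combinatorial content of the model picture: every pair of $a_0,b_0,c_0,d_0$ is disjoint, each triple co-bounding an embedded disc in $K_0$ forms an embedded triangle in $N$ via $\phi^{-1}$, and each triple co-bounding an annulus (respectively a M\"obius band) forms a regular (respectively twisted) non-embedded triangle. Next I would introduce an auxiliary essential arc $f\subseteq K$ in the manner of the arc $f$ in Lemma \ref{9}: it should be disjoint from three of $a,b,c,d$, intersect the remaining one exactly once, and complete a distinguished embedded triangle with two of the given arcs, so that the combinatorial picture of $K_0$ is encoded entirely through disjointness, single intersections, and triangle incidences (introducing a second such arc $g$ if the shape of $K_0$ demands it).

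Pushing everything forward through $\lambda$, pairwise disjointness persists by simpliciality, Lemma \ref{intone} transfers the single intersections, Lemma \ref{embedded} transfers the embedded triangles, and Lemmas \ref{regular} and \ref{twisted} transfer any non-embedded ones. Injectivity of $\lambda$ makes the image classes distinct and nonisotopic, so representatives $a',b',c',d',f'$ can be chosen simultaneously in minimal position realizing all the prescribed incidence data on $N$. Defining $K'$ to be a regular neighborhood of $a'\cup b'\cup c'\cup d'$ together with the pieces of $\partial N$ cut out in the previous step, the accumulated data then force a homeomorphism $\chi:K'\to K_0$ carrying each primed arc to its $0$-subscripted counterpart. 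The main obstacle, exactly as in Lemmas \ref{embedded}, \ref{regular}, and \ref{twisted}, is the finite case analysis ruling out the wrong gluings of the triangles around the primed arcs: each incorrect alternative should be excluded by producing a further auxiliary arc whose $\lambda$-image contradicts, via Lemma \ref{intone} or Lemma \ref{embedded}, the presence of a prescribed triangle or single intersection. The combinatorial bookkeeping is controlled by the number of triangles in $K_0$ appearing in Figure \ref{9fig} (i).
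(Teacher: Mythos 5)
Your proposal matches the paper's proof in essentially every respect: the paper likewise adds a single auxiliary arc $e=\phi^{-1}(e_0)$ inside $K$ that meets $b$ once and is disjoint from $a,c,d$, forms a regular non-embedded triangle with $c$ and an embedded triangle with $a,d$, and then transfers all the disjointness, single-intersection, and triangle-type data through $\lambda$ via Lemmas \ref{intone}, \ref{embedded}, and \ref{regular} before reading off $K'$ and $\chi$ from the image configuration. Your allowance for a second auxiliary arc and extra gluing case analysis is not needed here, but it does no harm.
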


\begin{figure}
\begin{center}
\epsfxsize=2.7in \epsfbox{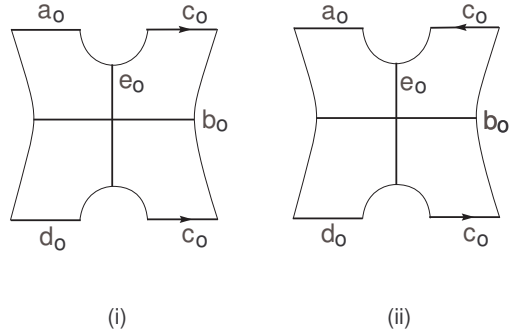}
\caption{Arc configurations V}
\label{9fig}
\end{center}
\end{figure}

\begin{proof} Let $e_0$ be as shown in Figure \ref{9fig} (i). Let $e = \phi^{-1}(e_0)$. We see that $e$ is an essential properly embedded arc on $N$ such that $e$ intersects $b$ once and $e$ is disjoint from $a, c, d$. Since $a, c, d, e$ are disjoint, $e, c$ form a regular non-embedded triangle, $a, d, e$ form an embedded triangle, by using that $\lambda$ is injective and the results of Lemma \ref{embedded} and Lemma \ref{regular}, we can choose
$a' \in \lambda([a]), e' \in \lambda([e]), c' \in \lambda([c]), d' \in \lambda([d])$ such that $a', c', d', e'$ are disjoint, $e', c'$ form a regular non-embedded triangle and $a', d', e'$ form an embedded triangle. Since $e, b$ intersect once and $e$ is disjoint from each of $a, c, d$, we can choose $b' \in \lambda([b])$ such that $e', b'$ intersect once and $e'$ is disjoint from each of $a', c', d'$, then the result of the lemma follows.
\end{proof}

\begin{lemma}
\label{9ii} Let $a, b, c, d$ be essential pairwise disjoint nonisotopic properly embedded arcs on $N$. Suppose that there exists a subsurface $K$ of $N$ and a homeomorphism $\phi: (K, a, b, c, d) \rightarrow (K_0, a_0, b_0, c_0, d_0)$ where $K_0$ and $a_0, b_0, c_0, d_0$ are as shown in Figure \ref{9fig} (ii). Then there exist $a' \in \lambda([a]), b' \in \lambda([b]), c' \in \lambda([c]), d' \in \lambda([d])$, $K' \subseteq N$ and a homeomorphism $\chi: (K', a', b', c', d') \rightarrow (K_0, a_0, b_0, c_0, d_0)$.
\end{lemma}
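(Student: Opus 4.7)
The plan is to follow exactly the scheme of Lemma \ref{9i}, replacing the appeal to Lemma \ref{regular} by an appeal to Lemma \ref{twisted}. The configuration in Figure \ref{9fig} (ii) is the twisted analogue of Figure \ref{9fig} (i), so I would introduce an auxiliary arc $e_0$ in $K_0$ chosen in the same combinatorial position as in the proof of Lemma \ref{9i}, with the crucial difference that now $e_0$ together with $c_0$ bounds a twisted (M\"obius-band) non-embedded triangle rather than a regular (annular) one; by construction $e_0$ meets $b_0$ once and is disjoint from $a_0,c_0,d_0$, and the triple $a_0,d_0,e_0$ forms an embedded triangle.

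Pulling this arc back, set $e=\phi^{-1}(e_0)$. Then on $N$ the arcs $a,c,d,e$ are pairwise disjoint, $e$ and $c$ form a twisted non-embedded triangle (with $e$ connecting one boundary component to itself), and $a,d,e$ form an embedded triangle; moreover $i([e],[b])=1$ and $e$ is disjoint from $a,c,d$. I would then invoke Lemma \ref{embedded} on the triple $(a,d,e)$ and Lemma \ref{twisted} on the pair $(e,c)$ simultaneously (together with Lemma \ref{intone} applied to the disjointness conditions, which are automatic because $\lambda$ is simplicial) to produce representatives $a'\in\lambda([a])$, $c'\in\lambda([c])$, $d'\in\lambda([d])$, $e'\in\lambda([e])$ that are pairwise disjoint on $N$ and realize the same embedded-triangle and twisted-triangle pattern. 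Finally, Lemma \ref{intone} applied to $i([e],[b])=1$, together with the simplicial condition on pairs $([b],[a]), ([b],[c]), ([b],[d])$, yields $b'\in\lambda([b])$ with $b'$ meeting $e'$ once and disjoint from $a',c',d'$.

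It remains to assemble the neighborhood $K'$. Take a regular neighborhood of $a'\cup b'\cup c'\cup d'\cup e'$ together with the relevant boundary arcs of $N$; the combinatorial data just established (disjointness pattern, the embedded triangle on $(a',d',e')$, the twisted non-embedded triangle on $(c',e')$, and $i(b',e')=1$ with $b'$ disjoint from the rest) determines this neighborhood up to homeomorphism. Since the same data defines $K_0$ from the five model arcs, there is a homeomorphism $\chi:(K',a',b',c',d')\to(K_0,a_0,b_0,c_0,d_0)$ as required.

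The main obstacle I expect is the verification that the combinatorial data listed above really pins down the subsurface up to homeomorphism, i.e.\ that no spurious ``side-gluings'' of the triangles can occur in $N$. This is the same issue that required the case analysis in Figure \ref{twisted?} for Lemma \ref{twisted}; here it should be cheaper because two of the needed configurations (the embedded triangle on $(a',d',e')$ and the twisted triangle on $(e',c')$) are already handed to us by the previous lemmas, so only the way they are attached along $e'$, together with the position of $b'$, has to be analyzed. Any alternative gluing of the embedded triangle to the twisted one along $e'$ would force either $b'$ not to have its single intersection with $e'$ in the correct region, or would contradict the disjointness of $b'$ with $c'$ or $d'$, exactly as the figures in the proofs of Lemmas \ref{regular} and \ref{twisted} rule out the wrong configurations.
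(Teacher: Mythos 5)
Your proposal follows essentially the same route as the paper's own proof: pull back the auxiliary arc $e_0$ from Figure \ref{9fig} (ii), use Lemma \ref{intone} together with Lemmas \ref{embedded} and \ref{twisted} to transfer the disjointness pattern, the embedded triangle on $(a,d,e)$, the twisted non-embedded triangle on $(e,c)$, and the single intersection of $b$ with $e$, and then conclude that the image configuration is homeomorphic to the model. Your closing remarks about ruling out spurious gluings are a reasonable (and slightly more explicit) elaboration of the paper's terse ``then the result of the lemma follows,'' so no substantive difference or gap arises.
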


\begin{proof} Let $e_0$ be as shown in Figure \ref{9fig} (ii). Let $e = \phi^{-1}(e_0)$. We see that $e$ is an essential properly embedded arc on $N$ such that $e$ intersects $b$ once and $e$ is disjoint from $a, c, d$. Since $a, c, d, e$ are disjoint, $e, c$ form a twisted non-embedded triangle, $a, d, e$ form an embedded triangle, by using that $\lambda$ is injective and the results of Lemma \ref{embedded} and Lemma \ref{twisted}, we can choose
$a' \in \lambda([a]), e' \in \lambda([e]), c' \in \lambda([c]), d' \in \lambda([d])$ such that $a', c', d', e'$ are disjoint, $e', c'$ form a twisted non-embedded triangle and $a', d', e'$ form an embedded triangle. Since $e, b$ intersect once and $e$ is disjoint from each of $a, c, d$, we can choose $b' \in \lambda([b])$ such that $e', b'$ intersect once and $e'$ is disjoint from each of $a', c', d'$, then the result of the lemma follows.
\end{proof}

\begin{figure}
\begin{center}
\epsfxsize=2.7in \epsfbox{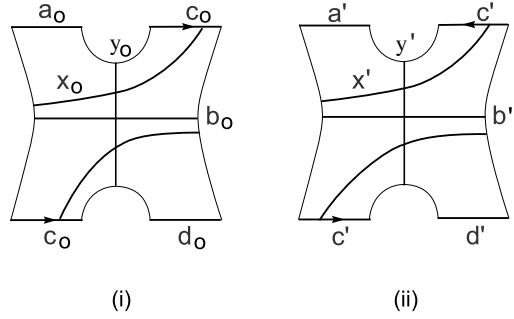}
\caption{Arc configurations VI}
\label{10fig}
\end{center}
\end{figure}

\begin{lemma}
\label{10i} Let $a, b, c, d$ be essential pairwise disjoint nonisotopic properly embedded arcs on $N$. Suppose that there exists a subsurface $K$ of $N$ and a homeomorphism $\phi:
(K, a, b, c, d) \rightarrow (K_0, a_0, b_0, c_0, d_0)$ where $K_0$ and $a_0, b_0, c_0, d_0$ are as shown in \ref{10fig} (i). There exist $a' \in \lambda([a]), b' \in \lambda([b]), c' \in \lambda([c]), d' \in \lambda([d])$, $K' \subseteq N$ and a homeomorphism $\chi: (K', a', b', c', d') \rightarrow (K_0, a_0, b_0, c_0, d_0)$.
\end{lemma}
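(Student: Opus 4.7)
The plan is to follow exactly the template set by Lemmas \ref{9i} and \ref{9ii}: introduce one auxiliary arc in the model $K_0$, pull it back to $N$, record its intersection data with $a,b,c,d$ and the triangle types (embedded, regular non-embedded, twisted non-embedded) it participates in, and then push everything forward using the lemmas already in hand.

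Concretely, I would choose an arc $e_0$ drawn on $K_0$ in Figure \ref{10fig}(i) so that $e_0$ is disjoint from three of $a_0,b_0,c_0,d_0$ and meets the fourth transversely in a single essential point, while the subfamilies $\{a_0,b_0,c_0,d_0,e_0\}$ decompose into pieces each of which is either an embedded triangle, a regular non-embedded triangle, or a twisted non-embedded triangle. Setting $e=\phi^{-1}(e_0)$ then gives a properly embedded essential arc on $N$ with the same combinatorial profile relative to $a,b,c,d$. I would then invoke Lemma \ref{intone} on each intersection-once pair, Lemma \ref{embedded} on each embedded triangle, and Lemmas \ref{regular} and \ref{twisted} on the non-embedded triangles (regular or twisted, according to what $K_0$ dictates), in order to produce representatives $a'\in\lambda([a])$, $b'\in\lambda([b])$, $c'\in\lambda([c])$, $d'\in\lambda([d])$, $e'\in\lambda([e])$ which are pairwise disjoint away from the prescribed single intersection point and which exhibit, on $N$, the same list of embedded and non-embedded triangles as $\{a,b,c,d,e\}$ does on $K$.

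Having assembled this data, I would take $K'$ to be a regular neighborhood of $a'\cup b'\cup c'\cup d'\cup e'$ (together with the relevant boundary arcs of $N$) and build the homeomorphism $\chi$ triangle by triangle: each embedded triangle on $N$ is glued to the corresponding one on $K_0$ in the unique way compatible with its sides, and the same is true for each non-embedded piece because the regular/twisted distinction fixes which of the two possible identifications must occur. After verifying compatibility along shared sides, dropping $e'$ from the picture yields $\chi:(K',a',b',c',d')\to(K_0,a_0,b_0,c_0,d_0)$ as required.

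The main obstacle is the last step, namely ruling out the alternative gluings of the constituent triangles once their types are fixed. This is precisely where Lemmas \ref{regular} and \ref{twisted} do their work: any spurious gluing would force one of the constraints on $\{a',b',c',d',e'\}$ to fail (for instance, an embedded triangle would become non-embedded, or a regular non-embedded triangle would come out twisted), contradicting either injectivity of $\lambda$ or one of the earlier lemmas. The choice of $e_0$ in the model is engineered so that the list of triangle constraints transported by the previous lemmas is rigid enough to admit only the configuration of Figure \ref{10fig}(i), and checking this rigidity is a finite case analysis analogous to Cases (i)--(iv) in the proof of Lemma \ref{embedded}.
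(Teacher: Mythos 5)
Your general template is the right one (pull auxiliary arcs back through $\phi$, transport their intersection and triangle data with Lemmas \ref{intone}, \ref{embedded}, \ref{regular}, \ref{twisted}, then rule out the wrong gluings), but the specific plan of using a \emph{single} auxiliary arc $e_0$ meeting exactly one of $a_0,b_0,c_0,d_0$ once and disjoint from the other three does not suffice for this configuration, and this is a genuine gap. In Figure \ref{10fig}(i) the four arcs bound two embedded triangles, $a,b,c$ and $b,c,d$, glued to each other along the \emph{two} shared sides $b$ and $c$; each shared side carries its own identification ambiguity, and the triangle types of triples among $a,b,c,d$ alone cannot distinguish the correct gluing from the bad one of Figure \ref{10fig}(ii) (that indistinguishability is exactly why the lemma needs a proof). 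An auxiliary arc crossing only one of the arcs can rigidify the identification along that one side only; the identification along the other shared side remains unconstrained, so the transported data would still be realized by the wrong configuration. This is why the paper's proof uses \emph{two} auxiliary arcs: $y=\phi^{-1}(y_0)$ crossing $b$ once (with $a,y,c$ an embedded triangle, pinning the gluing along $b'$) and $x=\phi^{-1}(x_0)$ crossing $c$ once (with $a,x,b$ an embedded triangle), so that if the sides of $c'$ were identified as in Figure \ref{10fig}(ii), then $a',x',b'$ would fail to form an embedded triangle, contradicting Lemma \ref{embedded}.

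Two further points. First, in this lemma no non-embedded (regular or twisted) triangles occur in the constraint list, so Lemmas \ref{regular} and \ref{twisted} play no role here; the argument runs entirely on Lemmas \ref{intone} and \ref{embedded} (your appeal to the regular/twisted dichotomy to ``fix which of the two possible identifications must occur'' does not apply to this figure). Second, you defer the decisive step --- verifying that the transported constraints admit only the configuration of Figure \ref{10fig}(i) --- to an unspecified ``finite case analysis'' with an unspecified choice of $e_0$; that verification, with the explicit arcs $x_0,y_0$ and the explicit contradiction above, is the actual content of the lemma and must be carried out rather than asserted.
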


\begin{proof} Let $x_0, y_0$ be as shown in Figure \ref{10fig} (i). Let $x = \phi^{-1}(x_0), y = \phi^{-1}(y_0)$. Since $a, b, c, d$ are pairwise disjoint, $a, b, c$ form an embedded triangle, and $b, c, d$ form an embedded triangle, by using that $\lambda$ is injective and the results of Lemma \ref{embedded} we can choose
$a' \in \lambda([a]), b' \in \lambda([b]), c' \in \lambda([c]), d' \in \lambda([d])$ such that $a', b', c', d'$ are pairwise disjoint, $a', b', c'$ form an embedded triangle and $b', c', d'$ form an embedded triangle. Since $y$ intersects $b$ once and $y$ is disjoint from $a, c, d$, we can choose $y' \in \lambda([y])$ such that $y'$ intersects $b'$ once and $y'$ is disjoint from $a', c', d'$. Since $a, y, c$ form an embedded triangle, $a', y', c'$ form an embedded triangle. Since $x$ intersects $c$ once and $x$ is disjoint from $a, b, d$, we can choose $x' \in \lambda([x])$ such that $x'$ intersects $c'$ once and $x'$ is disjoint from $a', b', d'$. Now if the identification of the sides of $c'$ is as shown in Figure \ref{10fig} (ii), then we would get a contradiction by Lemma \ref{embedded} as $a', x', b'$ do not form an embedded triangle while $a, x, b$ form an embedded triangle. Hence, the result of the lemma follows.
\end{proof}

\begin{lemma}
\label{11i}
Let $a, b, c, d$ be essential pairwise disjoint nonisotopic properly embedded arcs on $N$. Suppose that there exists a subsurface $K$ of $N$ and a homeomorphism $\phi: (K, a, b, c, d) \rightarrow (K_0, a_0, b_0, c_0, d_0)$ where $K_0$ and $a_0, b_0, c_0, d_0$ are as shown in \ref{11fig} (i). There exist $a' \in \lambda([a]), b' \in \lambda([b]), c' \in \lambda([c]), d' \in \lambda([d])$, $K' \subseteq N$ and a homeomorphism $\chi: (K', a', b', c', d') \rightarrow (K_0, a_0, b_0, c_0, d_0)$.
\end{lemma}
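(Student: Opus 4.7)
My plan is to mirror the strategy used in Lemma \ref{10i}, since the statement has the same form: we are given a local arc configuration $(K_0,a_0,b_0,c_0,d_0)$ and must transport it through $\lambda$ up to homeomorphism. The general idea is to enrich the given configuration by one or two auxiliary arcs in the model that pin down all the relevant triangles, apply Lemmas \ref{intone}, \ref{embedded}, \ref{regular}, \ref{twisted} to obtain primed versions with the same combinatorial data, and then rule out any bad identifications along the boundaries of the resulting triangles by contradiction.

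Concretely, I would first choose auxiliary arcs $x_0,y_0\subset K_0$ (analogous to those used in Lemma \ref{10i}) that together with $a_0,b_0,c_0,d_0$ exhibit every triangle (embedded, regular non-embedded, or twisted non-embedded) actually realized in the model. Setting $x=\phi^{-1}(x_0)$, $y=\phi^{-1}(y_0)$ gives essential properly embedded arcs on $N$ whose pairwise intersections with $a,b,c,d$ are either $0$ or $1$, and whose triple configurations with subsets of $\{a,b,c,d\}$ form the prescribed triangles. Using injectivity of $\lambda$, together with Lemmas \ref{intone}, \ref{embedded}, \ref{regular}, \ref{twisted}, I would then select representatives $a'\in\lambda([a])$, $b'\in\lambda([b])$, $c'\in\lambda([c])$, $d'\in\lambda([d])$, $x'\in\lambda([x])$, $y'\in\lambda([y])$ that are pairwise disjoint where required, intersect essentially once where required, and form the same types of triangles (embedded, regular non-embedded, or twisted non-embedded) as in the model.

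The main obstacle, as in Lemma \ref{10i}, will be ruling out unwanted identifications along the sides of the primed triangles. Specifically, even after producing $a',b',c',d'$ with all the correct triangle relations in each triple, the global gluing in $N$ could in principle identify two arcs of one triangle in a way that changes the topological type of the subsurface they bound (for instance, converting what should be an embedded triangle into a non-embedded one, or swapping a regular non-embedded triangle for a twisted one). I would eliminate each such possibility by comparing against one of the auxiliary arcs $x'$ or $y'$: the pulled-back arc has a fixed intersection pattern with $a',b',c',d'$ forced by Lemma \ref{intone}, and if the wrong identification held, then a triple containing $x'$ or $y'$ that is required by Lemmas \ref{embedded}, \ref{regular}, or \ref{twisted} to form a triangle of a specific type would fail to do so, giving the contradiction.

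Once all bad gluings are excluded, a regular neighborhood $K'$ of $a'\cup b'\cup c'\cup d'$ (possibly together with the relevant boundary components of $N$) will be homeomorphic to $K_0$ via a map $\chi$ sending the primed arcs to $a_0,b_0,c_0,d_0$, which is exactly the statement of the lemma. The verification that $K'$ is homeomorphic to $K_0$ is then a routine topological check using the combinatorial data established above.
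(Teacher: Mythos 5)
Your proposal follows essentially the same route as the paper: the paper's proof of this lemma likewise pulls back two auxiliary arcs $x_0,y_0$ from the model, transfers the configuration using injectivity together with Lemmas \ref{intone} and \ref{embedded} (only embedded triangles arise here, so Lemmas \ref{regular} and \ref{twisted} are not needed), and excludes the single bad identification of the sides of $c'$ by observing that it would force $a',x',d'$ not to form an embedded triangle while $a,x,d$ do. So your plan is correct and matches the paper's argument in all essentials.
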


\begin{proof} Let $x_0, y_0$ be as shown in Figure \ref{11fig} (i). Let $x = \phi^{-1}(x_0), y = \phi^{-1}(y_0)$. Since $a, b, c, d$ are pairwise disjoint, $a, b, c$ form an embedded triangle, and $b, c, d$ form an embedded triangle, by using that $\lambda$ is injective and the results of Lemma \ref{embedded} we can choose
$a' \in \lambda([a]), b' \in \lambda([b]), c' \in \lambda([c]), d' \in \lambda([d])$ such that $a', b', c', d'$ are pairwise disjoint, $a', b', c'$ form an embedded triangle and $b', c', d'$ form an embedded triangle. Since $y$ intersects $b$ once and $y$ is disjoint from $a, c, d$, we can choose $y' \in \lambda([y])$ such that $y'$ intersects $b'$ once and $y'$ is disjoint from $a', c', d'$. Since $a, y, c$ form an embedded triangle, $a', y', c'$ form an embedded triangle. Since $x$ intersects $c$ once and $x$ is disjoint from $a, b, d$, we can choose $x' \in \lambda([x])$ such that $x'$ intersects $c'$ once and $x'$ is disjoint from $a', b', d'$. Now if the identification of the sides of $c'$ is as shown in Figure \ref{11fig} (ii), then we would get a contradiction by Lemma \ref{embedded} as $a', x', d'$ do not form an embedded triangle while $a, x, d$ form an embedded triangle. Hence, the result of the lemma follows.
\end{proof}

\begin{figure}
\begin{center}
\epsfxsize=2.7in \epsfbox{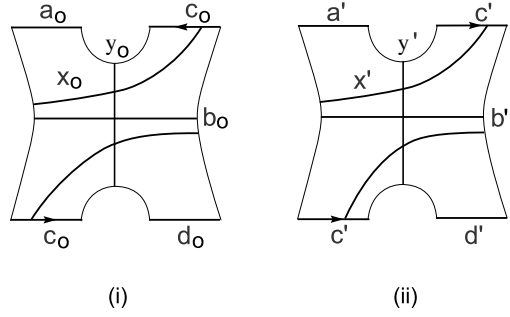}
\caption{Arc configurations VII}
\label{11fig}
\end{center}
\end{figure}

\begin{lemma}
\label{top} Suppose that $(g, r) \neq (1,1)$,  $(g, r) \neq (1,2)$ and $(g, r) \neq (2,1)$.
Let $\lambda :\mathcal{A}(N) \rightarrow \mathcal{A}(N)$ be an
injective simplicial map of $\mathcal{A}(N)$. $\lambda$ preserves the topological equivalence of ordered
triangulations on $N$ (i.e. for a given ordered triangulation $T = (a_1, a_2, \cdots, a_{3g+3r-6})$ on $N$, and a corresponding
ordered triangulation $T' = (a_1', a_2', \cdots, a_{3g+3r-6}')$ where $[a_i'] = \lambda([a_i])$ $\forall i= 1, 2,
\cdots, 3g+3r-6$, there exists a homeomorphism $h: N \rightarrow N$ such that $h(a_i) = a_i'$ $\forall i= 1, 2,
\cdots, 3g+3r-6$).\end{lemma}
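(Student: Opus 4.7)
The plan is to use the preceding lemmas about triangles and local arc configurations to upgrade the combinatorial bijection $a_i \mapsto a_i'$ into a geometric realization by a homeomorphism of $N$. The starting point is that, since $[T]$ is a maximal simplex of $\mathcal{A}(N)$ and $\lambda$ is injective simplicial, $\lambda([T])=[T']$ is also maximal; in particular $T'$ is a triangulation with the same arc count $3g+3r-6$ and triangle count $2g+2r-4$ as $T$.

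First I would establish a bijection $\Phi$ from the triangles of $T$ to the triangles of $T'$ that preserves triangle type. For each triangle $\Delta$ of $T$ with sides among $a_i,a_j,a_k$, one applies Lemma \ref{embedded}, Lemma \ref{regular}, or Lemma \ref{twisted}, according to whether $\Delta$ is embedded, a regular non-embedded (annular), or a twisted non-embedded (M\"obius) triangle. In each case the conclusion provides representatives of $\lambda([a_i])$, $\lambda([a_j])$, $\lambda([a_k])$ that form a triangle of the same type on $N$; an isotopy-through-disjoint-arcs argument then shows that the particular representatives $a_i',a_j',a_k'$ already present in $T'$ themselves bound a triangle of the same type, which I declare to be $\Phi(\Delta)$. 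The matching triangle counts force $\Phi$ to be a bijection.

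The heart of the argument is verifying that the local gluing data around each arc is preserved. Each arc of a triangulation sits in one of finitely many local pictures, determined by the types of the (one or two) triangles incident to it and by the identification pattern of its two sides relative to the surrounding arcs. Lemmas \ref{9i} and \ref{9ii} preserve configurations in which a regular (respectively twisted) non-embedded triangle is attached to an adjacent embedded triangle along a common arc, while Lemmas \ref{10i} and \ref{11i} rule out the two possible "wrong" self-identifications of an arc separating two embedded triangles. Every local picture around an arc in a triangulation of $N$ is captured, up to symmetry, by one of these model configurations (together with the embedded/regular/twisted triangle classification already handled), so applying the appropriate lemma locally at each arc forces the corresponding arc in $T'$ to sit inside the same local picture.

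With triangle types and local gluing patterns both preserved, I can build $h$ triangle-by-triangle: choose a homeomorphism $h_\Delta : \Delta \to \Phi(\Delta)$ carrying each side of $\Delta$ to the corresponding side of $\Phi(\Delta)$, and observe that the matching gluings at each arc allow one to arrange that the $h_\Delta$'s agree on common sides. Piecing them together produces a homeomorphism $h : N \to N$ with $h(a_i)=a_i'$ for every $i$. The main obstacle throughout is bookkeeping: in the nonorientable setting one must track not only adjacencies of embedded triangles but also how annular and M\"obius non-embedded triangles attach, and how a single arc's two sides can be identified globally. This is exactly what the configuration lemmas preceding this one are designed to control, so the proof reduces to a careful case analysis that invokes the right lemma for each local picture and then assembles the pieces.
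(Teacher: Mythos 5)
Your proposal follows essentially the same route as the paper: use Lemmas \ref{embedded}, \ref{regular}, \ref{twisted} to get a type-preserving correspondence of triangles between $T$ and $T'$, use the configuration Lemmas \ref{9}, \ref{9i}, \ref{9ii}, \ref{10i}, \ref{11i} to control the gluing (in the paper's language, the orientation types of the restrictions to shared sides, with the free choice $g_i$ versus $g_i^*$ available on non-embedded triangles), and then assemble the triangle-by-triangle homeomorphisms into a homeomorphism $h:N\rightarrow N$ carrying each $a_i$ to $a_i'$. This matches the paper's argument in structure and in the lemmas invoked, so no further comparison is needed.
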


\begin{proof} $N_T$ and $N_{T'}$ have both $2g + 2r - 4$ components. Let $\{\Delta_i: 1 \leq i \leq 2g+2b-4 \}$ be the distinct components of $N_T$. By Lemmas \ref{embedded}, \ref{regular}, \ref{twisted} we know that ``images'' of triangles of $T$ under $\lambda$ are triangles of $T'$, and the types of these triangles are preserved. Let $\Delta_i'$ be the component of $N_{T'}$ which corresponds to $\Delta_i$ under this correspondence.

Suppose that $\Delta_i$ is embedded. By Lemma \ref{embedded}, $\Delta'_i$ is embedded. Let $j_i$, $k_i$, and $l_i$ be the sides of $\Delta_i$ and $j'_i$, $k'_i$, and $l'_i$ be the corresponding sides of $\Delta'_i$. There exists a homeomorphism $g_i : (\Delta_i,j_i,k_i,l_i) \rightarrow (\Delta'_i,j'_i,k'_i,l'_i)$. The orientation type of $g_i$ (i.e. whether it is orientation-reversing or orientation-preserving) is fixed.

Suppose that $\Delta_i$ is regular non-embedded. By Lemma \ref{regular}, $\Delta'_i$ is regular non-embedded. Let $j_i$, $k_i$ be the sides of $\Delta_i$ such that $j_i$ joins two different boundary components of $\partial N$, and $k_i$ joins a component of $\partial N$ to itself. Let $j_i'$, $k_i'$ be the corresponding sides of $\Delta_i'$. By Lemma \ref{regular}, $j_i'$ joins two different boundary components of $\partial N$, and $k'_i$ joins a component of $\partial N$ to itself. There exist homeomorphisms $g_i, g_i^* : (\Delta_i,j_i,k_i) \rightarrow (\Delta'_i,j'_i,k'_i)$ such that $g_i$
is orientation preserving on $k_i$ and $g_i^*$ is orientation reversing on $k_i$.

Suppose that $\Delta_i$ is twisted non-embedded. By Lemma \ref{twisted}, $\Delta'_i$ is twisted non-embedded. Let $j_i$, $k_i$ be the sides of $\Delta_i$ such that $j_i$ joins two different boundary components of $\partial N$, and $k_i$ joins a component of $\partial N$ to itself. Let $j_i'$, $k_i'$ be the corresponding sides of $\Delta_i'$. By Lemma \ref{twisted}, $j_i'$ joins two different boundary components of $\partial N$, and $k'_i$ joins a component of $\partial N$ to itself. There exist homeomorphisms $g_i, g_i^* : (\Delta_i,j_i,k_i) \rightarrow (\Delta'_i,j'_i,k'_i)$ such that $g_i$
is orientation preserving on $k_i$ and $g_i^*$ is orientation reversing on $k_i$.

Lemmas \ref{9}, \ref{9i}, \ref{9ii}, \ref{10i}, \ref{11i} ensure that we can choose homeomorphisms $h_i : \Delta_i \rightarrow \Delta'_i$, $1 \leq i \leq 2g + 2r -4$, where $h_i$ is isotopic to $g_i$, if $\Delta_i$ is embedded, and $h_i$ is isotopic to either $g_i$ or $g^*_i$, if $\Delta_i$ is non-embedded, so that the unique homeomorphism $h : N_T \rightarrow N_{T'}$ whose restriction to $\Delta_i$ is equal to $h_i$, $1 \leq i \leq 2g + 2r -4$, covers a homeomorphism $h : N \rightarrow N$. By Lemmas \ref{9}, \ref{9i}, \ref{9ii}, \ref{10i}, \ref{11i}, the homeomorphisms $h_i : \Delta_i \rightarrow \Delta'_i$ and $h_j : \Delta_j \rightarrow \Delta'_j$ associated to embedded triangles $\Delta_i$ and $\Delta_j$ which have sides corresponding to the same element of $T$, can be isotoped by a relative isotopy to agree on that side. In other words, the restrictions of $h_i$ and $h_j$ to pairs of sides which correspond to the same element of $T$ have the same orientation type as such homeomorphisms between fixed elements of $T$ and $T'$.

When $\Delta_i$ is nonembedded, this condition on compatibility of orientation types of restrictions on pairs of sides which correspond to the same element of $T$ can be realized on all such pairs by making the appropriate choice of either $g_i$ or $g^*_i$.

Once the correct choices are made so that this compatibility of orientations is realized, we may isotope the chosen homeomorphisms, $h_i$, to homeomorphisms which agree, as homeomorphisms between fixed elements of $T$ and $T'$, on all pairs of sides which correspond to the same element of $T$. By gluing these homeomorphisms, we get a homeomorphism $h : N \rightarrow N$ which maps each element $j$ of $T$ to the corresponding element $j'$ of $T'$.
\end{proof}

\begin{theorem}
\label{last}
Suppose that $(g, r) \neq (1,1)$,  $(g, r) \neq (1,2)$ and $(g, r) \neq (2,1)$. If $\lambda : \mathcal{A}(N) \rightarrow \mathcal{A}(N)$ is an injective simplicial map, then it
is induced by a homeomorphism $h : N \rightarrow N$ (i.e $\lambda([a]) = [h(a)]$
for every essential properly embedded arc $a$ on $N$).\end{theorem}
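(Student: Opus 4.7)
The plan is to apply Lemma \ref{top} at a single fixed triangulation to produce a candidate homeomorphism $h$, and then to show by induction along elementary flips that $\lambda$ agrees with the induced simplicial map $h_\ast$ on every vertex of $\mathcal{A}(N)$.

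First, fix a triangulation $T = \{a_1, \ldots, a_{3g+3r-6}\}$ of $N$ and apply Lemma \ref{top} to obtain a homeomorphism $h : N \to N$ satisfying $h(a_i) = a_i'$, where $[a_i'] = \lambda([a_i])$ for each $i$. The composition $h_\ast^{-1} \circ \lambda$ is still an injective simplicial self-map of $\mathcal{A}(N)$, and by construction it fixes every vertex of $[T]$ pointwise. Replacing $\lambda$ by this composition, we may assume $\lambda$ fixes $[T]$; the theorem then reduces to showing that this normalized $\lambda$ is the identity on $\mathcal{A}(N)$.

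Given any essential properly embedded arc $b$ on $N$, extend $\{b\}$ to a triangulation $T_b$ and connect $T$ to $T_b$ by a finite sequence of elementary flips $T = T_0, T_1, \ldots, T_k = T_b$, where $T_{i+1}$ is obtained from $T_i$ by removing a single arc $a^i$ and inserting a single arc $c^i$; such a sequence exists because the flip graph of triangulations of $N$ is connected. We argue by induction on $i$ that $\lambda$ fixes every vertex of $[T_i]$ pointwise; the case $i = k$ then gives $\lambda([b]) = [b]$, and since $b$ was arbitrary, $\lambda$ is the identity. For the inductive step, injectivity of $\lambda$ forces $\lambda([T_{i+1}])$ to correspond to a triangulation, and since $\lambda$ already fixes $T_i \setminus \{a^i\} = T_{i+1} \setminus \{c^i\}$, the image $\lambda([c^i])$ must be the class of some arc disjoint from these $3g+3r-7$ arcs and completing them to a triangulation. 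Applying Lemma \ref{top} to $T_{i+1}$ yields a homeomorphism $h_{i+1}$ realizing $\lambda$ on $[T_{i+1}]$; since $h_{i+1}$ preserves the isotopy class of every arc in $T_{i+1} \setminus \{c^i\}$, cutting $N$ along these arcs reduces $h_{i+1}$ (up to isotopy) to a self-homeomorphism of a collection of disks — the triangles of $T_{i+1}$ not adjacent to $a^i$, together with the disk obtained by merging the two triangles of $T_{i+1}$ (equivalently $T_i$) previously adjacent to $a^i$. The mapping class group of such a disk relative to its combinatorially labeled boundary is finite, so there are only finitely many possibilities for the isotopy class of $h_{i+1}(c^i)$, corresponding to finitely many explicit arcs in the merged region; injectivity of $\lambda$ together with $\lambda([a^i]) = [a^i]$ rules out all of them except $\lambda([c^i]) = [c^i]$, completing the induction.

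The main technical obstacle is this last step in the non-orientable setting. When one or both of the triangles of $T_i$ adjacent to $a^i$ is non-embedded (regular or twisted), the merged region carries boundary identifications which permit candidate diagonals beyond the orientable pair $\{a^i, c^i\}$ — for instance, diagonals obtained via boundary slides or half-twists — and ruling these out will require combining the preservation-of-type statements in Lemmas \ref{embedded}, \ref{regular}, \ref{twisted} with the local configuration Lemmas \ref{9}--\ref{11i}, each of which constrains how non-embedded triangles and their adjacent arcs in a triangulation must map under $\lambda$. Together these pin $h_{i+1}$ down on the merged region up to isotopy and force $h_{i+1}(c^i)$ to be isotopic to $c^i$, so that $\lambda([c^i]) = [c^i]$ as required.
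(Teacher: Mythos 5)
Your overall skeleton is the same as the paper's: normalize $\lambda$ using Lemma \ref{top} on one fixed triangulation, then propagate vertex by vertex along a chain of triangulations related by elementary moves, using Hatcher's connectivity result. But the heart of the argument, the inductive step, is not actually carried out in your proposal: you yourself call the uniqueness of $\lambda([c^i])$ ``the main technical obstacle'' and only outline that ruling out extra candidate diagonals ``will require combining'' Lemmas \ref{embedded}, \ref{regular}, \ref{twisted}, \ref{9}--\ref{11i}. That is a plan, not a proof, and as framed it is also shakier than it needs to be: after cutting along $T_{i+1}\setminus\{c^i\}$ the piece containing $c^i$ need not be a disk (it can be an annulus or M\"obius band when the relevant triangle is non-embedded with the flipped arc as its repeated side), so the appeal to a finite mapping class group of a disk and ``finitely many explicit arcs in the merged region'' does not go through as stated, and re-invoking Lemma \ref{top} at $T_{i+1}$ only controls $h_{i+1}$ up to exactly the ambiguity you are trying to resolve.

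The missing ingredient is the purely combinatorial fact the paper uses (following Lemma 4.13 of \cite{Ir1}): every codimension-one face of a top dimensional simplex of $\mathcal{A}(N)$ is contained in at most two top dimensional simplices. Granting this, the inductive step is immediate and needs no geometry of the merged region at all: if the normalized $\lambda$ fixes $[T_i]$ pointwise, then $\lambda([T_{i+1}])$ is a top dimensional simplex containing the common codimension-one face $[T_i\setminus\{a^i\}]=[T_{i+1}\setminus\{c^i\}]$, hence $\lambda([c^i])\in\{[a^i],[c^i]\}$; injectivity rules out $[a^i]$ because $\lambda([a^i])=[a^i]$ and $[c^i]\neq[a^i]$, so $\lambda([c^i])=[c^i]$. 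In other words, your worry about diagonals arising from boundary slides or half-twists is resolved not by the configuration lemmas but by this ``one or two cofaces'' statement (which, when the flipped arc is the repeated side of a single non-embedded triangle, also covers the annulus/M\"obius situation). Either prove or cite that fact, as the paper does in items (i)--(iii) of its proof, and your induction closes; without it, the decisive step of your argument is left open.
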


\begin{proof} Let $\Delta$ be a top dimensional simplex in $\mathcal{A}(N)$. By Lemma \ref{top} there exists a homeomorphism $h : N \rightarrow N$
such that $\lambda$ agrees with the map induced by $h$ on every vertex in $\Delta$. We can see that $\lambda$ agrees with this map on every vertex of $\mathcal{A}(N)$
by following the proof of Lemma 4.13 in \cite{Ir1},
and using that (i) every vertex is contained in a top dimensional simplex, (ii) every codimension one face of a top dimensional simplex is contained in one or two top dimensional simplices, (iii) between any two top dimensional simplices $\Delta$ and $\Delta'$ of $\mathcal{A}(N)$ there exists a chain $\Delta = \Delta_0, \Delta_1, \cdots, \Delta_m = \Delta'$ of top dimensional simplices in $\mathcal{A}(N)$ connecting $\Delta$ to $\Delta'$ such that any two consecutive simplices $\Delta_i, \Delta_{i+1}$ have exactly one common face of codimension 1
(by Hatcher \cite{H}, (see also Mosher's proof for orientable case in \cite{Mos})).
\end{proof}

\begin{theorem} Suppose that $(g, r) \neq (1,1)$,  $(g, r) \neq (1,2)$ and $(g, r) \neq (2,1)$. Then
$Aut(\mathcal{A}(N)) \cong Mod_N $.
\end{theorem}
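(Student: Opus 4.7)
The plan is to construct the natural homomorphism
$\rho: Mod_N \to Aut(\mathcal{A}(N))$ defined by $\rho([h])([a]) = [h(a)]$,
and prove that it is an isomorphism. Well-definedness and the
homomorphism property are immediate from the observation that isotopic
homeomorphisms send isotopic arcs to isotopic arcs. Surjectivity is a
direct consequence of Theorem \ref{last}: any $\lambda \in Aut(\mathcal{A}(N))$
is in particular an injective simplicial self-map, so it is induced by
some homeomorphism $h: N \to N$, giving $\rho([h]) = \lambda$.

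For injectivity, suppose $[h]$ lies in $\ker \rho$, so $[h(a)] = [a]$ for
every essential properly embedded arc $a$. Fix a triangulation
$T = \{a_1, \ldots, a_{3g+3r-6}\}$ of $N$. After an ambient isotopy we may
assume $h(a_i) = a_i$ setwise for each $i$, so $h$ permutes the triangles of
$T$. Because the hypothesis $(g,r) \neq (1,1), (1,2), (2,1)$ excludes the
low-complexity surfaces where distinct triangles can share the same set of
sides, each triangle of $T$ is recovered from its side collection in $T$,
and hence $h$ sends each triangle to itself. To promote this to an isotopy
from $h$ to the identity, I would use auxiliary arcs to pin down
orientations: for each $a_i$, pick an arc $b$ crossing $a_i$ transversely
in a single point and disjoint from the other arcs of $T$; since $h$ fixes
$[a_i]$ and $[b]$ simultaneously and $b \cap a_i$ is a single point, $h$
must preserve the orientation of $a_i$. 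With every side of the triangulation
fixed setwise and with orientation, the restriction of $h$ to each closed
embedded triangle is a self-homeomorphism of a disc fixing the oriented
boundary, which is isotopic rel boundary to the identity by Alexander's lemma.
Gluing these isotopies produces an isotopy from $h$ to $1_N$.

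The main obstacle will be the orientation bookkeeping on non-embedded
(regular and twisted) triangles, where a side is glued to itself and an
orientation reversal on that side could in principle be absorbed by a
non-trivial self-homeomorphism of an annulus or Mobius band; this is
exactly the slack that produces the non-trivial centers in the excluded
cases. To rule it out under $g+r \geq 4$, I would apply Lemmas \ref{regular}
and \ref{twisted} to an arc system adjacent to the offending non-embedded
triangle, using the injectivity of $\lambda$ to force the orientation on the
self-glued side to be preserved. Once injectivity is established, $\rho$ is
an isomorphism, yielding $Aut(\mathcal{A}(N)) \cong Mod_N$.
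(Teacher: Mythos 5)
Your proposal follows the paper's proof scheme exactly: the natural homomorphism $Mod_N \to Aut(\mathcal{A}(N))$, surjectivity from Theorem \ref{last}, and injectivity from faithfulness of the action on isotopy classes of arcs; in fact the paper's entire proof is four sentences, and it simply asserts the faithfulness step (``if $\lambda$ fixes the isotopy class of every arc, then it is induced by the identity homeomorphism''), so your Alexander-method elaboration of injectivity supplies more detail than the paper does. One caution on that elaboration: Lemmas \ref{regular} and \ref{twisted} constrain an arbitrary injective simplicial map $\lambda$, so when the induced automorphism is the identity they are satisfied vacuously and cannot be what rules out a boundary slide absorbed along a self-glued side of a non-embedded triangle; that point, like your claim that fixing $[a_i]$ and a class $[b]$ with $i([a_i],[b])=1$ forces $h$ to preserve the orientation of $a_i$, has to be argued directly about the homeomorphism $h$ (for instance by adjoining further arcs meeting the Mobius band or annulus and checking that a slide or reflection would move one of their isotopy classes when $g+r \geq 4$), not deduced from the lemmas about $\lambda$. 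With that step made precise your argument is correct and is the standard way to justify the one sentence the paper leaves unproved.
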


\begin{proof} Every element of $Mod_N$ induces an automorphism of $\mathcal{A}(N)$. Every automorphism of $\mathcal{A}(N)$ is induced by a homeomorphism of $N$ by Theorem \ref{last}. If $\lambda : \mathcal{A}(N) \rightarrow \mathcal{A}(N)$ fixes the isotopy class of every arc, then
it is induced by the identity homeomorphism. Hence, we see that the groups are isomorphic.
\end{proof}

\vspace{0.5cm}

{\bf Acknowledgments}

\vspace{0.3cm}

We thank Ursula Hamenstadt for her comments about the paper. We thank Peter Scott for some discussions
about this work and Lee Mosher for his interest in the paper.


\vspace{0.2cm}

\noindent Bowling Green State University, Department of Mathematics and Statistics, Bowling Green, OH, 43403, USA; eirmak@bgsu.edu.\\

\end{document}